\documentclass[12pt]{amsart}
\usepackage[T1]{fontenc}
\usepackage{lmodern}
\usepackage{amsmath,amssymb,mathtools}
\usepackage[a4paper,left=28mm,right=28mm,top=27mm,bottom=29mm]{geometry}
\usepackage{microtype,xcolor,needspace}
\usepackage[colorlinks=true,linkcolor=blue!45!black,citecolor=blue!45!black,urlcolor=blue!45!black]{hyperref}
\numberwithin{equation}{section}
\newtheorem{theorem}{Theorem}[section]
\newtheorem{proposition}[theorem]{Proposition}
\newtheorem{lemma}[theorem]{Lemma}

\theoremstyle{definition}

\theoremstyle{remark}
\newtheorem{remark}[theorem]{Remark}
\newcommand{\R}{\mathbb R}
\newcommand{\C}{\mathbb C}
\newcommand{\Sph}{\mathbb S}
\newcommand{\dd}{\,dV_g}
\newcommand{\ii}{\mathrm i}
\newcommand{\eps}{\varepsilon}
\newcommand{\kap}{\kappa}
\newcommand{\tr}{\operatorname{tr}}

\newcommand{\Div}{\operatorname{div}}
\newcommand{\HS}{\mathrm{HS}}
\newcommand{\mf}{\mathsf F}
\newcommand{\ip}[2]{\langle #1,#2\rangle}
\newcommand{\norm}[1]{\lVert #1\rVert}
\newcommand{\abs}[1]{\lvert #1\rvert}
\newcommand{\cT}{\mathcal T}
\newcommand{\cG}{\mathcal G}
\newcommand{\cC}{\mathcal C}

\newcommand{\sphere}{\Sph^3}
\newcommand{\E}{\mathcal E}
\AtBeginDocument{%
  \setlength{\abovedisplayskip}{12pt plus 3pt minus 3pt}%
  \setlength{\belowdisplayskip}{12pt plus 3pt minus 3pt}%
  \setlength{\abovedisplayshortskip}{6pt plus 2pt}%
  \setlength{\belowdisplayshortskip}{9pt plus 2pt minus 2pt}%
}
\allowdisplaybreaks[1]
\hypersetup{pdftitle={Stable Abelian Yang--Mills--Higgs Fields on the Round Three-Sphere}}
\title[Stable abelian Higgs fields on the three-sphere]{Stable Abelian Yang--Mills--Higgs Fields\\on the Round Three-Sphere}
\date{September 18, 2026}
\subjclass[2020]{58E15, 53C07, 35B35}
\keywords{Abelian Yang--Mills--Higgs equations, stability, round sphere, gauge invariance, second variation}
\begin{document}
	\author{Xishen Jin}
	\address{Xishen Jin\\ Department of Mathematics\\ Renmin University of China \\ Beijing\\ 100872\\ China\\} \email{jinxishen@ruc.edu.cn}
\begin{abstract}
We prove that every stable weak critical point of the self-dual abelian
Yang--Mills--Higgs energy on the round three-sphere is gauge equivalent to
the vacuum, for every positive scale parameter. This establishes the
three-dimensional statement proposed by Cheng. For the non-stable critical point, we give a quantitative upper bound for the lowest eigenvalue of
the Hessian on the gauge quotient.
\end{abstract}
\maketitle

\section{Introduction}

Let $L$ be a Hermitian line bundle over a closed Riemannian manifold
$(M,g)$. For a unitary connection $D$ and a section $u$, write
$F_D=\ii D^2$ for the real curvature form and consider
\begin{equation}\label{eq:energy-eps}
 \E_\eps(D,u)=\frac12\int_M
 \left(\eps^2\abs{F_D}^2+\abs{Du}^2
       +\frac{(1-\abs u^2)^2}{4\eps^2}\right)\dd,
 \qquad \eps>0.
\end{equation}
A critical point is stable if the second variation is nonnegative for
all simultaneous connection and section variations. We prove that, on
the round three-sphere, stability forces $F_D=Du=0$ and $\abs u=1$. This resolves the three-dimensional case proposed by Cheng
\cite[Remark~1.7(2)]{Cheng2020} and, in the abelian setting,
completes the sphere stability picture developed by
Han--Jin--Wen \cite{HanJinWen2023} by treating the remaining
dimension three.

The coefficients in~\eqref{eq:energy-eps} are those of the self-dual
abelian Higgs model. On an oriented surface, the Bogomolny decomposition
expresses the energy as squares and a topological term; vanishing of
the squares gives the vortex equations
\cite{Bogomolny1976,JaffeTaubes1980}. Taubes established planar existence
with prescribed vortex zeros \cite{Taubes1980}. On compact surfaces,
the degree and area enter the existence criterion, as developed by
Bradlow and Garc\'ia-Prada \cite{Bradlow1990,GarciaPrada1994}.
These results describe minimizers; the converse from stability to the
first-order equations is a separate question. 

The higher-dimensional energy is related to codimension-two minimal
submanifolds. Pigati--Stern \cite{PigatiStern2021} obtained stationary
integral varifold limits of bounded-energy critical points as
$\eps\to0$. Parise--Pigati--Stern proved the corresponding
$\Gamma$-convergence and a comparison of min--max values
\cite{PPSGamma2024}, and related the parabolic equations to Brakke flow
\cite{PPSFlow2024}. Conversely, De~Philippis--Pigati constructed
critical points concentrating along non-degenerate minimal submanifolds
\cite{DePhilippisPigati2024}. Second inner variations and limiting index
bounds were studied by Marx-Kuo \cite{MarxKuo2025}. These results motivate
stability questions at fixed $\eps$, where no limiting argument is
available. In Euclidean space, De~Philippis--Halavati--Pigati established
excess decay and rigidity under a near-minimal energy density
assumption \cite{DHPExcess2026}.

Sphere stability arguments often sum second variations over
geometric families of vector fields. This method appears in
the work of Simons and Lawson--Simons
\cite{Simons1968,LawsonSimons1973}, and in the Yang--Mills
results of Bourguignon--Lawson--Simons and Bourguignon--Lawson
\cite{BourguignonLawsonSimons1979,BourguignonLawson1981}.
In dimension three, Stern's argument implies that stable
Yang--Mills connections on the round three-sphere are flat
\cite{Stern2010}. For the coupled Yang--Mills--Higgs system,
however, the connection equation contains a Higgs current,
and this conclusion does not apply directly. Cheng proved that every stable weak critical point of the
self-dual abelian Yang--Mills--Higgs energy on the round
$\Sph^n$, $n\ge4$, is gauge equivalent to the vacuum
\cite[Theorem~1.6]{Cheng2020}. He proposed the same conclusion
in dimension three \cite[Remark~1.7(2)]{Cheng2020}.
His subsequent work relates stability to the vortex equations
on $\Sph^2$ and $\mathbb T^2$ \cite{Cheng2021}.
For the more general (non-abelian) Yang--Mills--Higgs systems considered by
Han--Jin--Wen \cite{HanJinWen2023}, stability on $\Sph^n$
forces the Higgs field to be parallel and of unit length
when $n\ge4$. The connection is then Yang--Mills, and its
curvature vanishes when $n\ge5$.

The difficulty in dimension three is visible in the conformal
trace calculation. Let $X_\alpha$ be the gradients of the
ambient coordinate functions on $\Sph^n$. The variations used in \cite{HanJinWen2023} take
the form
\[
 V_\alpha=(\iota_{X_\alpha}F_D,D_{X_\alpha}u),
\]
and satisfy
\[
 \sum_{\alpha=1}^{n+1}Q(V_\alpha)
 =2\eps^2(4-n)\norm{F_D}_2^2
   +(2-n)\norm{Du}_2^2
\]
where $Q(V_\alpha)$ is the second variation along $V_\alpha$. At $n=3$, the curvature term has a positive coefficient,
so this identity alone does not force $F_D$ and $Du$ to vanish.

A different family of variations was introduced by Badran
in his recent study of stable entire solutions of the abelian Higgs
equations \cite{Badran2026}. He obtained rigidity results
in $\R^4$ under quadratic energy growth and a vortex
classification in $\R^3$ under linear energy growth.
His construction uses the matrix absolute value of the
curvature together with multiplication of the section
variation by $\ii$.

We adopt this construction on the sphere. Let $B$ be the nonnegative
symmetric tensor given by the absolute value of the curvature operator,
as defined intrinsically in Section~\ref{sec:absolute}. Then we use the variation
\[
 W_\alpha
 =\bigl(-B(X_\alpha,\cdot),\ii D_{X_\alpha}u\bigr)
\]
as in \cite{Badran2026}. The underlying conformal gradient fields $X_\alpha$ are the same as
those in $V_\alpha$. 
A direct computation then gives
\[
 \sum_{\alpha=1}^{4}Q(W_\alpha)
 \le -2\eps^2\norm{F_D}_2^2-\norm{Du}_2^2,
\]
which resolves the three-dimensional stability problem.

\begin{remark}
While this manuscript was being completed, Badran, Guaraco, and
Halavati \cite{BGH2026} adapted the same absolute-curvature test
pairs to general oriented Riemannian three-manifolds. Using localized
variations, they obtained quantitative integral estimates for stable
fields and proved that bounded-energy sequences with
$\varepsilon\to0$ have limiting interfaces given by finite unions of
immersed geodesics.
\end{remark}

We now state the main result in the weak formulation used by
Cheng \cite{Cheng2020}. Throughout the paper,
$\sphere\subset\R^4$ denotes the unit round three-sphere.
Since $H^2(\sphere;\mathbb Z)=0$, every Hermitian line bundle
over $\sphere$ is trivial. Fixing a unitary trivialization,
we write
\[
 D=d-\ii A,\qquad F=F_D=dA.
\]
Gauge transformations act by
\begin{equation}\label{eq:gauge-intro}
 (A,u)\longmapsto(A+d\chi,e^{\ii\chi}u).
\end{equation}
Every smooth map $\sphere\to S^1$ admits a real lift because
$\sphere$ is simply connected. We call the gauge orbit
of $(0,1)$ the vacuum.

Following Cheng, we consider the configuration space
\begin{equation}\label{eq:weak-class-intro}
 \cC=\{(A,u): A\in W^{1,2}(T^*\sphere;\R),\quad
             u\in W^{1,2}(\sphere;\C)\cap L^\infty\}.
\end{equation}
A pair in $\cC$ is a weak critical point if its first variation
vanishes against every smooth pair, and is stable if its
second variation is nonnegative for every such pair.
As shown in Appendix~\ref{app:regularity}, every weak critical
point admits a smooth representative under a gauge
transformation $e^{\ii\chi}$ with $\chi\in W^{2,2}$,
and stability extends to $H^1=W^{1,2}$ variations.

\begin{theorem}\label{thm:main}
For every $\eps>0$, every stable weak critical point $(A,u)\in\cC$ of
$\E_\eps$ on the unit round three-sphere is gauge equivalent to $(0,1)$.
Equivalently, $F=0$, $Du=0$, and $\abs u=1$.
Conversely, every pair satisfying these identities is stable.
\end{theorem}

Put $\kap=\eps^2$. With the real inner product on complex quantities,
define
\begin{equation}\label{eq:weighted-intro}
\begin{aligned}
 \norm{(a,w)}_\kap^2&=\int_{\sphere}(\kap\abs a^2+\abs w^2)\dd,\\
 \cG(a,w)&=\kap d^*a+\ip{\ii u}{w}.
\end{aligned}
\end{equation}
The equation $\cG(a,w)=0$ defines the orthogonal complement of the
infinitesimal gauge directions $(d\chi,\ii u\chi)$.
Let $\mu_1$ be the infimum of $Q(Z)/\norm Z_\kap^2$ over nonzero
$H^1$ variations in this complement. In Section~\ref{sec:spectrum},
we justify its interpretation as the lowest eigenvalue.

\begin{theorem}\label{thm:gap}
Let $(A,u)\in\cC$ be a weak critical point. 

\begin{itemize}
    \item If
$2\eps^2\norm F_2^2+\norm{Du}_2^2>0$, then $\mu_1\le-1$.
\item The only nonvacuum critical point with
$2\eps^2\norm F_2^2+\norm{Du}_2^2=0$ is, up to gauge, $(0,0)$,
where $\mu_1=-1/(2\eps^2)$.
\end{itemize} 
 Thus every nonvacuum critical point satisfies
\[
 \mu_1\le-\min\left\{1,\frac1{2\eps^2}\right\}.
\]
\end{theorem}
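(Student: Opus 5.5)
We plan to use the four test pairs $W_\alpha=(-B(X_\alpha,\cdot),\ii D_{X_\alpha}u)$ from the introduction, together with the summed inequality
\[
 \sum_{\alpha=1}^4 Q(W_\alpha)\le -2\eps^2\norm F_2^2-\norm{Du}_2^2 .
\]
We take this inequality as established in the body of the paper; it holds for the smooth gauge representative given by Appendix~\ref{app:regularity}. The argument has three steps.

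\emph{Step 1: normalize the test pairs.} The $W_\alpha$ need not lie in the gauge-orthogonal complement $\{\cG=0\}$. We first note that $Q$ is gauge invariant at a critical point: $Q(Z+(d\chi,\ii u\chi))=Q(Z)$. This holds because infinitesimal gauge directions lie in the kernel of the Hessian bilinear form, since $\E_\eps$ is gauge invariant and $(A,u)$ is critical. We therefore replace each $W_\alpha$ by its $\norm\cdot_\kap$-orthogonal projection $\widetilde W_\alpha$ onto the complement. The projection $P$ is obtained by solving the elliptic equation $\kap d^*d\chi+\abs u^2\chi=\cG(W_\alpha)$. Since $u\not\equiv0$ in this case, this operator is invertible; the degenerate case is handled separately in Step 3. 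Projection leaves $Q$ unchanged and does not increase norms:
\[
 Q(\widetilde W_\alpha)=Q(W_\alpha),\qquad \norm{\widetilde W_\alpha}_\kap\le\norm{W_\alpha}_\kap .
\]

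\emph{Step 2: compare with the norms.} We compute $\sum_\alpha\norm{W_\alpha}_\kap^2$ using $\sum_\alpha X_\alpha\otimes X_\alpha=g$ on $\sphere$. Since $\abs{B}^2=\abs{F}^2_{\mathrm{op}}$, with the normalization in which $\abs{B}^2_{HS}=2\abs F^2$, we get
\[
 \sum_\alpha\kap\abs{B(X_\alpha,\cdot)}^2=\kap\abs B_{\HS}^2=2\kap\abs F^2,
 \qquad
 \sum_\alpha\abs{D_{X_\alpha}u}^2=\abs{Du}^2 .
\]
Hence
\[
 \sum_\alpha\norm{\widetilde W_\alpha}_\kap^2\le\sum_\alpha\norm{W_\alpha}_\kap^2=2\eps^2\norm F_2^2+\norm{Du}_2^2=:N .
\]
The whole argument rests on exactly this normalization of $B$, so the main obstacle is to check it against the intrinsic definition in Section~\ref{sec:absolute}. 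If the constant came out differently, the bound would become $\mu_1\le-c$ for some other $c$.

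Now suppose $N>0$. The numerator satisfies $\sum_\alpha Q(\widetilde W_\alpha)\le-N<0$, so some $\widetilde W_\alpha$ is nonzero. If $\mu_1>-1$, then $Q(Z)\ge\mu_1\norm Z_\kap^2$ for every $Z$ in the complement, with $\mu_1\norm Z^2_\kap\ge-\norm Z^2_\kap$ whenever $\mu_1\ge-1$. Summing over $\alpha$ gives
\[
 -N\ge\sum_\alpha Q(\widetilde W_\alpha)\ge\mu_1\sum_\alpha\norm{\widetilde W_\alpha}_\kap^2\ge\mu_1 N ,
\]
where the last inequality uses $\mu_1<0$ and the norm bound. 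This forces $\mu_1\le-1$. The same chain yields the bound even without a strict contradiction: if $\mu_1\ge0$ the left side would be nonnegative, which is impossible.

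\emph{Step 3: the degenerate case $N=0$.} Then $F=0$ and $Du=0$, so up to gauge $A=0$ and $u$ is a constant $c$. The Higgs equation $\Delta u=\frac1{2\eps^2}(1-\abs u^2)u$ (after checking the paper's constants) reduces to $(1-\abs c^2)c=0$. So either $\abs c=1$, which is the vacuum, or $c=0$. At $(0,0)$ the Hessian decouples:
\[
 Q(a,w)=\int\Bigl(\kap\abs{da}^2+\abs{dw}^2-\frac{\abs w^2}{2\eps^2}\Bigr),
\]
and $\cG=\kap d^*a$. The infimum of $Q/\norm\cdot_\kap^2$ is attained at constant $w$, $a=0$, and equals $-1/(2\eps^2)$. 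The $a$-part is nonnegative, since $d^*a=0$ and $da$ penalizes it. Combining the two cases gives the stated bound $\mu_1\le-\min\{1,1/(2\eps^2)\}$.
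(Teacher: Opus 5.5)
Your proposal is correct and follows the paper's proof essentially step for step. You project the $W_\alpha$ onto $\ker\cG$, which preserves $Q$ and does not increase $\norm\cdot_\kap$; you combine $\sum_\alpha\norm{W_\alpha}_\kap^2=2\kap\norm F_2^2+\norm{Du}_2^2$ with the trace inequality to get $\mu_1\le-1$; and you treat $N=0$ by reducing to $(0,c)$ and computing the decoupled Hessian at $(0,0)$. The one point you assert without proof is that $N>0$ forces $u\not\equiv0$. It does hold: $u\equiv0$ gives $Du=0$ and hence $\delta F=0$, so $F$ is a harmonic two-form on $\sphere$ and vanishes. Alternatively, as the paper does, you can solve the projection equation when $u\equiv0$ by the Fredholm alternative with a mean-zero normalization.
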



\noindent\textbf{Organization Of The Paper:} In Section~\ref{sec:variation}, we record the field equations and
the Hessian. In Section~\ref{sec:absolute}, we establish the
properties of the absolute curvature tensor. In Section~\ref{sec:direct}, we compute the Hessian trace and prove the stability classification. In Section~\ref{sec:spectrum}, we prove the eigenvalue estimate.

\section{first and second variation of Energy}\label{sec:variation}

We work on the unit round three-sphere $\sphere$ and put $\kap=\eps^2$.
In a unitary trivialization, $D=d-\ii A$ and $F=dA$.
The energy is
\[
 E_\kap(A,u)=\frac12\int_{\sphere}
 \left(\kap\abs F^2+\abs{Du}^2
       +\frac{(1-\abs u^2)^2}{4\kap}\right)\dd,
 \qquad E_\kap=\E_{\sqrt\kap}.
\]
All inner products of complex quantities are real:
\begin{equation}\label{eq:complex-identities}
 \ip zw=\operatorname{Re}(z\overline w),
 \qquad \ip{\ii z}w=-\ip z{\ii w}.
\end{equation}
We use the induced real inner products on complex-valued tensors.
In particular, $\ip{Du}{\ii u}$ denotes the real one-form
$X\mapsto\ip{D_Xu}{\ii u}$; it will always be written in this form.
We write $\delta=d^*$ and use the nonnegative Laplacians
\[
\Delta= \Delta_H=d\delta+\delta d.
\]
The Levi--Civita connection is denoted by $\nabla$, and its coupling
with $D$ on complex-valued tensors by $\nabla^A$.
The musical isomorphism $a\mapsto a^\sharp$ is defined by the metric.
Unless otherwise indicated, all integrals and norms are on $\sphere$.

\subsection{The First Variation}

Along the affine path $(A_t,u_t)=(A+ta,u+tw)$,
\[
 F_{A_t}=F+t\,da,
\]
and
\[
 D_{A_t}u_t=Du+t(Dw-\ii au)-\ii t^2aw.
\]
The potential part in the energy is differentiated using
\[
 1-\abs{u+tw}^2
 =1-\abs u^2-2t\ip uw-t^2\abs w^2.
\]
Consequently,
\begin{align}
 \delta E_\kap(a,w)
 ={}&\int\left[\kap\ip F{da}+\ip{Du}{Dw-\ii au}
       -\frac{1-\abs u^2}{2\kap}\ip uw\right]\dd
       \notag\\
 ={}&\int\ip{\kap\delta F-\ip{Du}{\ii u}}a\dd
       +\int\ip{D^*Du-\frac{1-\abs u^2}{2\kap}u}w\dd.
 \label{eq:first-variation}
\end{align}
Thus the critical point equations are
\begin{equation}\label{eq:EL}
 \kap\delta F=\ip{Du}{\ii u},\qquad
 D^*Du=\frac{1-\abs u^2}{2\kap}u.
\end{equation}
We also recall the first Bianchi identity $dF=0$, which follows from $F=dA$.

\subsection{The Second Variation }

At a critical point, the second variation along $(a,w)$ is
\begin{equation}\label{eq:raw-hessian}
\begin{split}
 Q(a,w)={}&\int\left[
 \kap\abs{da}^2+\abs{Dw-\ii au}^2
 -2\ip{Du}{\ii aw}\right]\dd\\
 &+\frac1\kap\int\left[
 (\ip uw)^2-\frac{1-\abs u^2}{2}\abs w^2\right]\dd.
\end{split}
\end{equation}
Metric compatibility gives the identity of real one-forms
\[
 d\ip{\ii u}w=\ip{\ii Du}w+\ip{\ii u}{Dw}.
\]
It follows that
\begin{align*}
 -2\ip{Dw}{\ii au}-2\ip{Du}{\ii aw}
 &=-2\ip a{d\ip{\ii u}w}
       +4\ip{\ii D_{a^\sharp}u}w,\\
 \int\ip a{d\ip{\ii u}w}\dd
 &=\int\ip{\ii u}w\,\delta a\dd.
\end{align*}
where $a^\sharp$ denote the metric dual of a one-form $a$,
characterized by
\[
 g(a^\sharp,X)=a(X)
 \qquad\text{for every }X\in TM.
\]
Moreover,
\[
 (\ip uw)^2+(\ip{\ii u}w)^2=\abs u^2\abs w^2.
\]
Substituting these identities into~\eqref{eq:raw-hessian} gives
\begin{equation}\label{eq:hessian-integrated}
\begin{split}
 Q(a,w)={}&\int\left[
 \kap\abs{da}^2+\abs{Dw}^2+\abs u^2\abs a^2
 +4\ip{\ii D_{a^\sharp}u}w\right]\dd\\
 &+\int\left[
 -2\ip{\ii u}w\,\delta a
 +\frac{3\abs u^2-1}{2\kap}\abs w^2
 -\frac1\kap(\ip{\ii u}w)^2\right]\dd.
\end{split}
\end{equation}

\section{The absolute curvature tensor}\label{sec:absolute}

By Proposition~\ref{prop:regularity}, every weak critical point has a
smooth gauge representative, and stability is preserved.
We work with such a representative in this section and the next. In this section, we adapt to the round sphere the absolute-curvature
construction introduced by Badran in Euclidean space
\cite[Section~3.1]{Badran2026}. Whereas Badran defines the absolute
curvature as the absolute value of a skew-symmetric matrix in fixed
Euclidean coordinates, we regard the curvature as a skew-adjoint
bundle endomorphism and formulate the construction intrinsically.
The resulting tensor is therefore globally defined and independent
of the choice of local orthonormal frame.

The curvature defines a skew-adjoint endomorphism $\mf$ of the real
cotangent bundle by
\begin{equation}\label{eq:curvature-operator}
 (\mf\xi)(X)=\xi\bigl((\iota_XF)^\sharp\bigr),
 \qquad \xi\in T^*\sphere,\quad X\in T\sphere.
\end{equation}
We extend this endomorphism complex linearly. In an orthonormal coframe
its matrix is $(F(e_i,e_j))_{ij}$. In particular,
\begin{equation}\label{eq:two-form-norm}
 \mf^*=-\mf,\qquad \norm{\mf}_{\HS}^2=2\abs F^2.
\end{equation}
The factor $2$ comes from the convention
$\abs F^2=\sum_{i<j}F(e_i,e_j)^2$.

Following Badran's construction, define
\begin{equation}\label{eq:B-definition}
 B=\abs{\ii\mf}=\sqrt{-\mf^2}.
\end{equation}
Thus $B$ is the nonnegative self-adjoint square root of $-\mf^2$. Since $\mf^*=-\mf$, the endomorphism $-\mf^2=\mf^*\mf$
is self-adjoint and nonnegative, and so is its nonnegative
square root $B$.
We also regard it as a symmetric covariant two-tensor, with the
convention
\[
 B(X,Y)=\ip{B(X^\flat)}{Y^\flat}
\]
where $X^\flat$ is the metric-dual one form of $X$. 
The divergence of this symmetric tensor is the real one-form
\begin{equation}\label{eq:div-convention}
 (\Div B)(X)
 =\operatorname{tr}_g\bigl[(Y,Z)\mapsto(\nabla_YB)(Z,X)\bigr].
\end{equation}
This convention gives $\Div B=\sum_i(\nabla_{e_i}B)(e_i,\cdot)$
in any local orthonormal frame.

\begin{lemma}\label{lem:absolute-algebra}
If $F$ is smooth, then $B$ is locally Lipschitz. Pointwise,
\begin{equation}\label{eq:absolute-algebra}
 B^2=\mf^*\mf,\qquad
 \norm B_{\HS}^2=2\abs F^2,\qquad
 \tr B=2\abs F.
\end{equation}
Furthermore, for every complex-valued one-form $\xi$,
\begin{equation}\label{eq:absolute-positive}
 \ip{(B+\ii\mf)\xi}{\xi}\ge0.
\end{equation}
The covariant derivatives satisfy
\begin{equation}\label{eq:absolute-derivative-equality}
 \norm{\nabla B}_{\HS}^2=2\abs{\nabla F}^2
 \qquad\text{almost everywhere}.
\end{equation}
\end{lemma}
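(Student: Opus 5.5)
The whole lemma reduces to linear algebra of $3\times3$ skew matrices, because $\sphere$ is three-dimensional. I would identify $F$ with the one-form $v=*F$. In an oriented orthonormal coframe the matrix $(F(e_i,e_j))$ is then the cross-product matrix of $v$, up to a sign fixed by the orientation convention, so $\mf\xi=\pm\, v\times\xi$ with $\abs v=\abs F$.

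\textbf{Closed formula and pointwise identities.} The map $\mf$ has kernel spanned by $v$, and it acts on $v^\perp$ as $\abs v$ times a rotation by $\pi/2$. Hence $-\mf^2=\abs v^2\,\Pi$, where $\Pi=\mathrm{Id}-n\otimes n$, $n=v/\abs v$, is the orthogonal projection onto $v^\perp$. Taking the nonnegative square root gives the explicit formula
\[
 B=\abs v\,\Pi=\frac{\abs v^2\,\mathrm{Id}-v\otimes v}{\abs v}\quad(v\neq0),\qquad B=0\quad(v=0).
\]
The identity $B^2=\mf^*\mf$ holds by definition. The identities $\tr B=2\abs v=2\abs F$ and $\norm B_{\HS}^2=2\abs v^2=2\abs F^2$ are then read off, since $\Pi$ has rank $2$.

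\textbf{Lipschitz regularity and positivity.} The map $\Phi(v)=(\abs v^2\mathrm{Id}-v\otimes v)/\abs v$ is positively $1$-homogeneous and smooth on $\R^3\setminus\{0\}$, so its differential is bounded. It is continuous at $0$, so $\Phi$ is globally Lipschitz on each fibre. Composing with the smooth section $v=*F$, using a local orthonormal frame to trivialize, gives local Lipschitz continuity of $B$. For \eqref{eq:absolute-positive}, the operator $H=\ii\mf$ is Hermitian on the complexified cotangent space, and $B=\abs H$ by \eqref{eq:B-definition}. Since $B$ and $H$ commute, the sum $B+H=\abs H+H$ is diagonal in an eigenbasis of $H$ with entries $\abs\lambda+\lambda\ge0$. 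This gives $\ip{(B+\ii\mf)\xi}\xi\ge0$.

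\textbf{Derivative identity (main step).} On the open set $\{F\neq0\}$ I would write $B=\abs v\,\Pi$ and differentiate:
\[
 \nabla B=d\abs v\otimes\Pi+\abs v\,\nabla\Pi .
\]
The cross term vanishes because $\ip{\Pi}{\nabla\Pi}_{\HS}=\tfrac12\nabla\abs\Pi_{\HS}^2=0$, as $\abs\Pi^2_{\HS}=2$ is constant. We have $\abs\Pi_{\HS}^2=2$, and $\nabla\Pi=-(\nabla n\otimes n+n\otimes\nabla n)$ with $\nabla n\perp n$ gives $\abs{\nabla\Pi}^2_{\HS}=2\abs{\nabla n}^2$. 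Therefore
\[
 \norm{\nabla B}_{\HS}^2=2\bigl(\abs{d\abs v}^2+\abs v^2\abs{\nabla n}^2\bigr)=2\abs{\nabla v}^2 ,
\]
again using $v=\abs v n$ and $n\perp\nabla n$. Finally $\abs{\nabla v}=\abs{\nabla F}$, because the Hodge star is a parallel isometry.

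The expected obstacle is the zero set $\{F=0\}$, which may have positive measure and nonempty boundary. I would handle it with the standard fact that for a Lipschitz (or $W^{1,1}_{\mathrm{loc}}$) function $f$, $\nabla f=0$ almost everywhere on $\{f=0\}$. Applied componentwise to $B$ and to the smooth $F$, this shows both sides of \eqref{eq:absolute-derivative-equality} vanish almost everywhere there. That completes the identity almost everywhere.
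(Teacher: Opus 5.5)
Your proof is correct. Its second half matches the paper's argument exactly: the computation on $\{F\ne0\}$ via $B=\abs F\,(g-\nu\otimes\nu)$ with $\nu=*F/\abs F$, followed by locality of weak derivatives on $\{F=0\}$.

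The routes differ in two places.

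For local Lipschitz continuity, the paper does not use the closed formula. It invokes the Hermitian Hilbert--Schmidt contraction $\norm{\abs H-\abs K}_{\HS}\le\norm{H-K}_{\HS}$ (Kittaneh) together with orthogonal equivariance of the absolute value. That argument works in any dimension and also gives the covariant bound $\norm{\nabla_XB}_{\HS}\le\sqrt2\,\abs{\nabla_XF}$; the equality in dimension three then comes from the explicit formula. You instead use dimension three from the outset. Writing $B=\Phi(*F)$ with $\Phi$ positively $1$-homogeneous and smooth off the origin makes $\Phi$ globally Lipschitz. This is more elementary and lets one formula drive the whole lemma. However, it relies on $\mf$ having a single curvature plane: in higher dimensions $\abs{\ii\mf}$ is no longer smooth away from $\mf=0$.

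For positivity, the paper computes in the normal form $F=r\,e^1\wedge e^2$ and obtains the explicit expression $r\abs{\xi_1+\ii\xi_2}^2$. You argue that $B=\abs H$ commutes with $H=\ii\mf$, so $B+H$ has eigenvalues $\abs\lambda+\lambda\ge0$. This is frame-free and dimension-independent, but it does not exhibit the kernel of $B+\ii\mf$ explicitly.
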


\begin{proof}
The identity $B^2=\mf^*\mf$ follows from skew-adjointness.
At a point where $F\ne0$, choose an oriented orthonormal coframe
with $F=r\,e^1\wedge e^2$, where $r=\abs F$.
Then
\begin{equation}\label{eq:absolute-normal}
 \mf=\begin{pmatrix}0&r&0\\-r&0&0\\0&0&0\end{pmatrix},
 \qquad B=\begin{pmatrix}r&0&0\\0&r&0\\0&0&0\end{pmatrix}.
\end{equation}
The norm and trace identities follow. For a complex one-form
$\xi=\xi_1e^1+\xi_2e^2+\xi_3e^3$,
\begin{equation}\label{eq:transverse-square}
 \ip{(B+\ii\mf)\xi}{\xi}
 =r\abs{\xi_1+\ii\xi_2}^2\ge0.
\end{equation}
All of these assertions are immediate when $F=0$.

We next verify the regularity at the zero set of $F$.
For Hermitian operators $H,K$ on the same finite-dimensional space,
\begin{equation}\label{eq:absolute-contraction}
 \norm{\abs H-\abs K}_{\HS}\le\norm{H-K}_{\HS}.
\end{equation}
Indeed, choose orthonormal eigenbases $v_i,w_j$ with eigenvalues
$\lambda_i,\mu_j$. The coefficients
$c_{ij}=\abs{v_i^*w_j}^2$ have all row and column sums equal to one.
Expanding the Hilbert--Schmidt norms gives
\begin{align*}
 \norm{\abs H-\abs K}_{\HS}^2
 &=\sum_{i,j}(\abs{\lambda_i}-\abs{\mu_j})^2c_{ij}\\
 &\le\sum_{i,j}(\lambda_i-\mu_j)^2c_{ij}
 =\norm{H-K}_{\HS}^2.
\end{align*}
This is the Hermitian case of the estimate in
\cite{Kittaneh1985}; its use for the curvature appears in
\cite[Section~3.1]{Badran2026}.

The absolute value is equivariant under orthogonal conjugation:
\begin{equation}\label{eq:equivariance}
 \abs{O^THO}=O^T\abs H\,O.
\end{equation}
Indeed, the right-hand side is nonnegative and its square is
$O^TH^2O$, so uniqueness of the nonnegative square root applies.
In a smooth local orthonormal trivialization, smoothness of $F$ and
\eqref{eq:absolute-contraction} imply that $B$ is locally Lipschitz.

We also give a covariant interpretation of the locally Lipschitz property. Let
$\gamma(0)=x$, $\dot\gamma(0)=X$, and let
$P_t:T_x\sphere\to T_{\gamma(t)}\sphere$ be parallel transport.
Pullback by $P_t$ identifies tensors along $\gamma$ with tensors at $x$.
By~\eqref{eq:equivariance},
\[
 P_t^*B_{\gamma(t)}=\abs{\ii P_t^*\mf_{\gamma(t)}}.
\]
Here pullback of an endomorphism means conjugation by the induced
parallel transport on cotangent spaces.
Consequently,
\[
 \norm{P_t^*B_{\gamma(t)}-B_x}_{\HS}
 \le\sqrt2\,\abs{P_t^*F_{\gamma(t)}-F_x}.
\]
Taking difference quotients at a differentiability point of $B$ yields
\[
 \norm{\nabla_XB}_{\HS}\le\sqrt2\,\abs{\nabla_XF}.
\]

Next we prove, in dimension three, the derivative norms are equal. On $\{F\ne0\}$,
write $r=\abs F$ and $\nu=*F/r$. As covariant tensors,
\[
 \abs\nu=1,\qquad F=r*\nu,\qquad
 B=r(g-\nu\otimes\nu).
\]
For any tangent vector $X$, metric compatibility gives
$\ip{\nabla_X\nu}\nu=0$, and hence
\[
 \nabla_XB=(Xr)(g-\nu\otimes\nu)
 -r\bigl((\nabla_X\nu)\otimes\nu
              +\nu\otimes\nabla_X\nu\bigr).
\]
The two terms on the right are orthogonal. Their squared norms give
\[
 \norm{\nabla_XB}_{\HS}^2
 =2(Xr)^2+2r^2\abs{\nabla_X\nu}^2.
\]
Since the Hodge star is parallel and isometric,
\[
 \nabla_XF=*((Xr)\nu+r\nabla_X\nu),\qquad
 \abs{\nabla_XF}^2=(Xr)^2+r^2\abs{\nabla_X\nu}^2.
\]
Thus the claimed equality holds on $\{F\ne0\}$.
On $\{F=0\}$, both $B$ and $F$ vanish. The locality property of weak
derivatives \cite[Section~9.8, item~4, p.~314]{Brezis2011}, applied
to their coefficients, gives $\nabla B=\nabla F=0$ almost everywhere
there; the connection terms vanish because the tensors themselves
vanish. Taking the metric trace over $X$ proves
\eqref{eq:absolute-derivative-equality}.
\end{proof}

\section{Stable Yang-Mills-Higgs Fields on $\sphere$ }\label{sec:direct}

\subsection{Two Integrated Bochner Identities}

We first record two Bochner identities. The endomorphisms $\mf$ and $B$
act on the one-form part of $Du$.

\begin{lemma}\label{lem:bochner}
At a smooth critical point on the unit round three-sphere,
\begin{equation}\label{eq:curvature-bochner}
\begin{split}
 2\kap\norm{\nabla F}_2^2+2\int\abs u^2\abs F^2\dd
 =-4\kap\norm F_2^2-2\int\ip{\ii\mf(Du)}{Du}\dd.
\end{split}
\end{equation}
Also,
\begin{equation}\label{eq:section-bochner}
\begin{split}
 \norm{\nabla^A Du}_2^2
 +\int\frac{3\abs u^2-1}{2\kap}\abs{Du}^2\dd
 =-2\norm{Du}_2^2-2\int\ip{\ii\mf(Du)}{Du}\dd.
\end{split}
\end{equation}
\end{lemma}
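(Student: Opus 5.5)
The plan is to derive both identities from Weitzenböck formulas on the unit round $\sphere$, where $\mathrm{Ric}=2g$. Each one combines a Bochner formula with the field equations~\eqref{eq:EL} and the Bianchi identity $dF=0$, and is then integrated by parts. The critical point is smooth by Proposition~\ref{prop:regularity}, so every integration by parts is legitimate.

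\emph{Curvature identity.} Since $dF=0$, we have $\Delta F=d\delta F$. The Weitzenböck formula for two-forms on the unit $\Sph^n$ has curvature term $2(n-2)$, which equals $2$ for $n=3$. Integrating against $F$ therefore gives
\[
 \norm{\delta F}_2^2=\norm{\nabla F}_2^2+2\norm F_2^2 .
\]
Instead of computing $\norm{\delta F}_2^2$ directly, I will rewrite $\kap\, d\delta F$ using the first equation $\kap\delta F=\ip{Du}{\ii u}$. The real one-form $\ip{Du}{\ii u}$ is differentiated by metric compatibility. The second-derivative part involves $d^AD u=-\ii F u$ and contributes $\ip{-\ii Fu}{\ii u}=-\abs u^2F$. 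The remaining part is a quadratic two-form $q(Du)=\ip{Du\wedge\ii Du}{\cdot}$, up to a constant. Hence
\[
 \kap\Delta F=-\abs u^2F+q(Du).
\]
Pairing with $F$ and integrating gives
\[
 \kap\norm{\nabla F}_2^2+2\kap\norm F_2^2+\int\abs u^2\abs F^2\dd=\int\ip{q(Du)}F\dd .
\]
Next I will check in an orthonormal frame, using the matrix $(F(e_i,e_j))$ of $\mf$, that $\ip{q(Du)}F=-\ip{\ii\mf(Du)}{Du}$ with the correct normalisation. Multiplying by $2$ and moving terms across then yields~\eqref{eq:curvature-bochner}.

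\emph{Section identity.} I will apply the coupled Weitzenböck formula to the $L$-valued one-form $Du$:
\[
 (d^{A*}d^A+d^Ad^{A*})Du=(\nabla^A)^*\nabla^A Du+\mathrm{Ric}(Du)+(\text{bundle curvature term}).
\]
The bundle curvature term is a multiple of $\ii\mf(Du)$. Pairing with $Du$ and integrating gives
\[
 \norm{d^ADu}_2^2+\norm{D^*Du}_2^2=\norm{\nabla^ADu}_2^2+2\norm{Du}_2^2+c\int\ip{\ii\mf(Du)}{Du}\dd .
\]
I will then evaluate the two left-hand terms by moving one derivative back onto the field equations.

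For the first term, write $\norm{D^*Du}_2^2=\int\ip{D(D^*Du)}{Du}\dd$ and substitute $D^*Du=\frac{1-\abs u^2}{2\kap}u$. This produces
\[
 \frac1{2\kap}\int\left[(1-\abs u^2)\abs{Du}^2-2\textstyle\sum_j\ip u{D_ju}^2\right]\dd .
\]
For the second term, use $d^ADu=-\ii Fu$ and integrate by parts to get $\int\ip{\delta^A(-\ii Fu)}{Du}\dd$. The piece containing $\delta F$ becomes $-\frac1\kap\sum_j\ip{\ii u}{D_ju}^2$ by the first field equation. The piece containing $\iota Du$ of $F$ is again a multiple of $\ip{\ii\mf(Du)}{Du}$.

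The two squared sums combine through the pointwise identity $\ip u{D_ju}^2+\ip{\ii u}{D_ju}^2=\abs u^2\abs{D_ju}^2$. Together with the remaining zeroth-order terms, this is meant to produce exactly $-\frac{3\abs u^2-1}{2\kap}\abs{Du}^2$, which moves to the left side of~\eqref{eq:section-bochner}. The main obstacle is this step: the $\ip u{Du}^2$ and $\ip{\ii u}{Du}^2$ terms must come with matching coefficients, after possibly also using $\abs{d\abs u^2}$-type integrations by parts. I also need to check that all the $\ii\mf$ contributions, from the Weitzenböck term and from the $F(Du)$ term, add up to exactly $-2\int\ip{\ii\mf(Du)}{Du}\dd$. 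I will settle the signs and constants by testing on a local frame with $F=r\,e^1\wedge e^2$, as in~\eqref{eq:absolute-normal}.
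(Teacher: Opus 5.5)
Your proposal is correct and is essentially the paper's proof. The first identity comes from the two-form Weitzenb\"ock identity paired with $\kap\Delta_HF=d\ip{Du}{\ii u}$. The second comes from the twisted Weitzenb\"ock formula applied to $Du$, together with $d_DDu=-\ii Fu$, $d_D^*(Fu)=(\delta F)u+\mf(Du)$ and the field equations.

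The constants you left open do close up as required:
\begin{itemize}
\item The curvature term in the twisted formula is exactly $+\ii\mf(Du)$, so $c=1$. Together with the $-\int\ip{\ii\mf(Du)}{Du}\dd$ coming from $\norm{d_DDu}_2^2$, this yields the coefficient $-2$.
\item The sums $\sum_j\ip u{D_ju}^2$ and $\sum_j\ip{\ii u}{D_ju}^2$ both appear with coefficient $-1/\kap$. They therefore combine pointwise to $-\abs u^2\abs{Du}^2/\kap$, and no further integration by parts is needed.
\end{itemize}
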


\begin{proof}
For tangent vectors $X,Y$, metric compatibility and $D^2=-\ii F$ give
\begin{align*}
 d\ip{Du}{\ii u}(X,Y)
 &=\ip{D^2u(X,Y)}{\ii u}
   +\ip{D_Yu}{\ii D_Xu}-\ip{D_Xu}{\ii D_Yu}\\
 &=-\abs u^2F(X,Y)+2\ip{\ii D_Xu}{D_Yu}.
\end{align*}
Contracting this identity with $F$ yields
\begin{equation}\label{eq:current-contraction}
 \ip{d\ip{Du}{\ii u}}F
 =-\abs u^2\abs F^2-\ip{\ii\mf(Du)}{Du}.
\end{equation}
The sign can be checked from~\eqref{eq:curvature-operator}:
in any orthonormal frame,
\[
 \ip{\ii\mf(Du)}{Du}
 =-\sum_{i,j}F(e_i,e_j)\ip{\ii D_{e_i}u}{D_{e_j}u}.
\]

For two-forms on the unit round three-sphere, the Weitzenb\"ock formula is
\[
 \Delta_HF=\nabla^*\nabla F+2F.
\]
Since $dF=0$ and $\kap\delta F=\ip{Du}{\ii u}$,
\[
 \kap\Delta_HF=d\ip{Du}{\ii u}.
\]
Pairing with $F$ and integrating by parts, we obtain
\[
 \kap\norm{\nabla F}_2^2+2\kap\norm F_2^2
 =-\int\abs u^2\abs F^2\dd
   -\int\ip{\ii\mf(Du)}{Du}\dd.
\]
Multiplication by two proves~\eqref{eq:curvature-bochner}.

For the second identity, let $d_D$ be the exterior covariant derivative
on complex-valued forms, and let $d_D^*$ be its formal adjoint.
On sections, $d_D=D$. The Weitzenb\"ock formula on complex-valued
one-forms is
\begin{equation}\label{eq:twisted-weitzenbock}
 (d_Dd_D^*+d_D^*d_D)\xi
 = (\nabla^A)^*\nabla^A\xi+2\xi+\ii\mf\xi.
\end{equation}
Here $2\xi$ is the Ricci term. The final term is the action of the
line-bundle curvature, with sign fixed by $D^2=-\ii F$ and
\eqref{eq:curvature-operator}.

Apply~\eqref{eq:twisted-weitzenbock} to $\xi=Du$.
Since
\[
 d_DDu=-\ii Fu,\qquad d_D^*Du=D^*Du,
\]
the product rule gives
\[
 d_D^*(Fu)=(\delta F)u+\mf(Du).
\]
It follows that
\begin{equation}\label{eq:commuted-section}
 (\nabla^A)^*\nabla^A Du
 =D(D^*Du)-2Du-\ii(\delta F)u-2\ii\mf(Du).
\end{equation}
Substitute the field equations~\eqref{eq:EL}. Differentiating the
potential explicitly,
\[
 D\left(\frac{1-\abs u^2}{2\kap}u\right)
 =\frac{1-\abs u^2}{2\kap}Du
   -\frac{1}{2\kap}u\,d\abs u^2.
\]
Pairing~\eqref{eq:commuted-section} with $Du$ and integrating yields
\begin{align*}
 \norm{\nabla^A Du}_2^2
 ={}&\int\left(\frac{1-\abs u^2}{2\kap}-2\right)
          \abs{Du}^2\dd\\
 &-\frac1\kap\int\left(
    \abs{\ip u{Du}}^2+\abs{\ip{Du}{\ii u}}^2\right)\dd
   -2\int\ip{\ii\mf(Du)}{Du}\dd.
\end{align*}
Finally, the real and imaginary parts of the Hermitian pairing give
\[
 \abs{\ip u{Du}}^2+\abs{\ip{Du}{\ii u}}^2
 =\abs u^2\abs{Du}^2.
\]
Rearrangement proves~\eqref{eq:section-bochner}.
\end{proof}

\subsection{The Conformal Variations}

Let $E_1,\ldots,E_4$ be the standard basis of $\R^4$.
For $x\in\sphere$, set
\[
 f_\alpha(x)=\ip{E_\alpha}x,\qquad
 X_\alpha=\nabla f_\alpha=E_\alpha-f_\alpha x.
\]
The sphere geometry gives
\begin{equation}\label{eq:sphere-derivative}
 \nabla_YX_\alpha=-f_\alpha Y
 \qquad\text{for every tangent vector }Y.
\end{equation}
The summation identities are
\begin{equation}\label{eq:sphere-frame}
 \sum_{\alpha=1}^4f_\alpha^2=1,\qquad
 \sum_{\alpha=1}^4f_\alpha X_\alpha=0,\qquad
 \sum_{\alpha=1}^4X_\alpha\otimes X_\alpha=g^{-1}.
\end{equation}
In particular, the last identity contracts any pair of one-forms
without choosing a local frame.We also recall that, with respect
to the $L^2$ inner product on vector fields,
\[
 \mathfrak{conf}(\sphere)
 =\mathfrak{isom}(\sphere)
  \oplus
  \operatorname{span}\{X_1,X_2,X_3,X_4\}.
\]
Indeed, $\operatorname{div}K=0$ for every Killing field $K$, so
\[
 \int_\sphere\ip K{X_\alpha}\dd
 =-\int_\sphere f_\alpha\operatorname{div}K\dd=0.
\]

Define the four variations as in \cite{Badran2026},
\begin{equation}\label{eq:W-definition}
 W_\alpha=(a_\alpha,w_\alpha)
 =\bigl(-B(X_\alpha,\cdot),\ii D_{X_\alpha}u\bigr),
 \qquad 1\le\alpha\le4.
\end{equation}
Since $B$ is locally Lipschitz and we work with a smooth representative
$(D,u)$, the variations $W_\alpha$ belong to $H^1$.


\noindent\textbf{Comparison with the standard conformal variations.}
The variations~\eqref{eq:W-definition} are closely related to the
standard conformal variations 
\[
 V_\alpha=(\iota_{X_\alpha}F,D_{X_\alpha}u).
\]
Indeed, on $\{F\ne0\}$, set $K=\mf/\abs F$. Since
\[
 B=\abs F\,P,\qquad K^2=-P,\qquad \mf=KB=BK,
\]
where $P$ is the orthogonal projection onto the curvature plane,
$\mf=KB$ is the polar decomposition of $\mf$. Indeed, our convention for $\mf$ gives
$\iota_XF=-\mf(X^\flat)$. We obtain
\[
 -K(\iota_XF)
 =K\mf(X^\flat)
 =K^2B(X^\flat)
 =-B(X,\cdot).
\]

If we define $\mathbb J_F(a,w)=(-Ka,\ii w)$, then $\mathbb J_F$ defines a complex structure on the subspace containing these variations and
\[
 W_\alpha=\mathbb J_FV_\alpha, V_\alpha=-\mathbb J_FW_\alpha.
\]
In this sense, $W_\alpha$ is obtained from
$V_\alpha$ by a quarter-turn in the curvature plane together with
multiplication by $\ii$ in the section component.  At points where $F=0$, both connection components vanish.


\begin{proposition}\label{prop:negative-trace}
At a smooth critical point, the variations~\eqref{eq:W-definition}
satisfy
\begin{equation}\label{eq:negative-trace}
\begin{split}
 \sum_{\alpha=1}^4Q(W_\alpha)
 ={}&-2\kap\norm F_2^2-\norm{Du}_2^2\\
 &-4\int\ip{(B+\ii\mf)(Du)}{Du}\dd\\
 &-\frac1\kap\norm{\kap\Div B+\frac12d\abs u^2}_2^2.
\end{split}
\end{equation}
Consequently,
\[
 \sum_{\alpha=1}^4Q(W_\alpha)
 \le-2\kap\norm F_2^2-\norm{Du}_2^2.
\]
\end{proposition}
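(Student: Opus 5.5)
We substitute $W_\alpha=(a_\alpha,w_\alpha)$ into the integrated form \eqref{eq:hessian-integrated} of the Hessian. We then sum over $\alpha$ using \eqref{eq:sphere-derivative} and \eqref{eq:sphere-frame}, so that every sum becomes a frame-free contraction. Finally we eliminate the derivative terms with the two Bochner identities of Lemma~\ref{lem:bochner}. The inequality at the end follows at once from \eqref{eq:absolute-positive} and the sign of the last square.

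\textbf{Step 1: the individual sums.} Write $a_\alpha=-B(X_\alpha,\cdot)$. By \eqref{eq:sphere-derivative}, $\nabla_Y a_\alpha=-(\nabla_YB)(X_\alpha,\cdot)+f_\alpha B(Y,\cdot)$. Summing with \eqref{eq:sphere-frame} kills the cross terms, because $\sum f_\alpha X_\alpha=0$. This gives
\[
 \sum_\alpha\abs{\nabla a_\alpha}^2=\norm{\nabla B}_{\HS}^2+\norm B_{\HS}^2 .
\]
We will need $\abs{da_\alpha}^2$ rather than $\abs{\nabla a_\alpha}^2$. For this we use the integrated identity $\norm{da}^2=\norm{\nabla a}^2+2\norm a^2+\norm{\delta a}^2-2\norm{\delta a}^2$, that is, the Bochner formula $\norm{da}^2+\norm{\delta a}^2=\norm{\nabla a}^2+2\norm a^2$ on one-forms on the unit $\sphere$. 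We also need $\delta a_\alpha=(\Div B)(X_\alpha)-f_\alpha\tr B$, whence
\[
 \sum_\alpha(\delta a_\alpha)^2=\abs{\Div B}^2+(\tr B)^2 .
\]
The remaining sums are computed the same way:
\begin{itemize}
\item $\sum_\alpha\abs{a_\alpha}^2=\norm B_{\HS}^2$;
\item $\sum_\alpha\abs{Dw_\alpha}^2=\abs{\nabla^ADu}^2+\abs{Du}^2$, since $\nabla_Y(D_{X_\alpha}u)=(\nabla^ADu)(Y,X_\alpha)-f_\alpha D_Yu$;
\item $\sum_\alpha\abs{w_\alpha}^2=\abs{Du}^2$;
\item $\sum_\alpha\ip{\ii u}{w_\alpha}^2=\abs{\ip{Du}{u}}^2=\tfrac14\abs{d\abs u^2}^2$;
\item $\sum_\alpha\ip{\ii D_{a_\alpha^\sharp}u}{w_\alpha}=\ip{B(Du)}{Du}$, up to sign bookkeeping;
\item $\sum_\alpha\ip{\ii u}{w_\alpha}\delta a_\alpha=-\tfrac12\ip{d\abs u^2}{\Div B}$, where the $f_\alpha$ term drops out.
\end{itemize}
Lemma~\ref{lem:absolute-algebra} then converts $\norm B_{\HS}^2$, $\tr B$ and $\norm{\nabla B}_{\HS}^2$ into $2\abs F^2$, $2\abs F$ and $2\abs{\nabla F}^2$.

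\textbf{Step 2: assembly.} The terms $\kap\abs{\Div B}^2$, $\ip{d\abs u^2}{\Div B}$ and $\frac1{4\kap}\abs{d\abs u^2}^2$ should combine into the square $\frac1\kap\abs{\kap\Div B+\frac12 d\abs u^2}^2$, appearing with a minus sign. That sign comes from the $-\kap\norm{\delta a}^2$ in the $da$ conversion and from the $-\frac1\kap(\ip{\ii u}w)^2$ term. The term $(\tr B)^2=4\abs F^2$ must cancel against the $2\norm a^2$ contributions. The remaining $2\kap\norm{\nabla F}^2+2\int\abs u^2\abs F^2$ is replaced using \eqref{eq:curvature-bochner}. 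The remaining $\norm{\nabla^ADu}^2+\int\frac{3\abs u^2-1}{2\kap}\abs{Du}^2$ is replaced using \eqref{eq:section-bochner}. Each replacement produces $-2\int\ip{\ii\mf(Du)}{Du}$. Together with the $4\ip{B(Du)}{Du}$-type term, these should give exactly $-4\int\ip{(B+\ii\mf)Du}{Du}$. What is left is $-2\kap\norm F^2-\norm{Du}^2$.

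\textbf{Main obstacle.} The delicate point is the exact bookkeeping of constants and signs, and the justification of integrations by parts for the merely Lipschitz $B$. The constants include the $\pm2\kap\norm F^2$ from the Ricci terms and $(\tr B)^2$, and the signs include those of the $B$ and $\ii\mf$ cross terms. I would first verify the coefficients on the model case $F=r\,e^1\wedge e^2$ using \eqref{eq:absolute-normal}. For regularity, $B\in W^{1,\infty}$ and $\nabla B$ exists a.e. by Lemma~\ref{lem:absolute-algebra}. Hence the one-form Bochner identity holds for $a_\alpha\in H^1$ by density, which suffices because only integrated identities are used.
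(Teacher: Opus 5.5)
Your route is the paper's. You substitute $W_\alpha$ into \eqref{eq:hessian-integrated}, contract with \eqref{eq:sphere-frame}, and pass from $\abs{\nabla a_\alpha}^2$ to $\abs{da_\alpha}^2$ by the one-form Weitzenb\"ock identity. You then cancel $\kap(\tr B)^2$ against $2\kap\norm{B}_{\HS}^2$ and remove the derivative terms with Lemma~\ref{lem:bochner}. This bookkeeping does give \eqref{eq:negative-trace}.

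Two signs need fixing.

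First, $\ip{\ii u}{w_\alpha}=\tfrac12 d\abs u^2(X_\alpha)$ and $\delta a_\alpha=(\Div B)(X_\alpha)-f_\alpha\tr B$. Hence $\sum_\alpha\ip{\ii u}{w_\alpha}\delta a_\alpha=+\tfrac12\ip{d\abs u^2}{\Div B}$, not $-\tfrac12$. With your sign the square would be $\abs{\kap\Div B-\tfrac12 d\abs u^2}^2$, which contradicts your own Step~2. The final inequality is unaffected either way.

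Second, the mixed term is $\sum_\alpha\ip{\ii D_{a_\alpha^\sharp}u}{w_\alpha}=-\ip{B(Du)}{Du}$. This minus sign is what combines with the two Bochner contributions to give $-4\int\ip{(B+\ii\mf)(Du)}{Du}\dd$.

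The paper avoids tracking the cross term separately. It completes the square for each $\alpha$ before summing, using $\kap\delta a_\alpha+\ip{\ii u}{w_\alpha}=(\kap\Div B+\tfrac12d\abs u^2)(X_\alpha)-\kap f_\alpha\tr B$.
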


\begin{proof}
All derivative identities below hold almost everywhere.
Integration by parts for $a_\alpha$ is justified by smooth
approximation in $H^1$.

\smallskip
\noindent\emph{Norms of the variations and their derivatives.}
For tangent vectors $Y,Z$, differentiation of~\eqref{eq:W-definition}
and~\eqref{eq:sphere-derivative} gives
\[
 (\nabla_Ya_\alpha)(Z)
 =-(\nabla_YB)(X_\alpha,Z)+f_\alpha B(Y,Z),
\]
and
\[
 D_Yw_\alpha
 =\ii(\nabla_Y^ADu)(X_\alpha)-\ii f_\alpha D_Yu.
\]
Using~\eqref{eq:sphere-frame} to square and sum, we obtain
\begin{equation}\label{eq:a-contractions}
\begin{aligned}
 \sum_\alpha\abs{a_\alpha}^2
 &=\norm B_{\HS}^2=2\abs F^2,\\
 \sum_\alpha\abs{\nabla a_\alpha}^2
 &=\norm{\nabla B}_{\HS}^2+\norm B_{\HS}^2.
\end{aligned}
\end{equation}
The same identities give
\begin{equation}\label{eq:w-contractions}
\begin{aligned}
 \sum_\alpha\abs{w_\alpha}^2&=\abs{Du}^2,\\
 \sum_\alpha\abs{Dw_\alpha}^2
 &=\abs{\nabla^ADu}^2+\abs{Du}^2.
\end{aligned}
\end{equation}
In both derivative calculations the cross terms vanish because
$\sum_\alpha f_\alpha X_\alpha=0$.

For a smooth real one-form $a$ on $\sphere$,
\[
 (d\delta+\delta d)a=\nabla^*\nabla a+2a.
\]
Taking its $L^2$ inner product with $a$ gives
\begin{equation}\label{eq:one-form-weitzenbock}
 \norm{da}_2^2+\norm{\delta a}_2^2
 =\norm{\nabla a}_2^2+2\norm a_2^2.
\end{equation}
This identity extends to $H^1$ one-forms by density.
Applying it to $a_\alpha$ and using~\eqref{eq:a-contractions},
\begin{equation}\label{eq:da-sum}
 \sum_\alpha\norm{da_\alpha}_2^2
 =\int\left(\norm{\nabla B}_{\HS}^2+6\abs F^2\right)\dd
   -\sum_\alpha\norm{\delta a_\alpha}_2^2.
\end{equation}

\smallskip
\noindent\emph{The mixed term.}
Since $a_\alpha^\sharp=-(B(X_\alpha,\cdot))^\sharp$, we have
\begin{equation}\label{eq:mixed-W}
\begin{split}
 \sum_\alpha\ip{\ii D_{a_\alpha^\sharp}u}{w_\alpha}
 &=-\sum_\alpha
 \ip{D_{(B(X_\alpha,\cdot))^\sharp}u}{D_{X_\alpha}u}\\
 &=-\ip{B(Du)}{Du}.
\end{split}
\end{equation}
The last equality is the contraction
$\sum_\alpha X_\alpha\otimes X_\alpha=g^{-1}$.

\smallskip
\noindent\emph{The divergence terms.}
Metric compatibility and~\eqref{eq:W-definition} imply
\begin{equation}\label{eq:section-pairing-W}
 \ip{\ii u}{w_\alpha}
 =\ip u{D_{X_\alpha}u}
 =\frac12d\abs u^2(X_\alpha).
\end{equation}
Taking the negative metric trace of $\nabla a_\alpha$ gives
\begin{equation}\label{eq:div-W}
 \delta a_\alpha=(\Div B)(X_\alpha)-f_\alpha\tr B.
\end{equation}
Here symmetry of $B$ identifies the contraction of $\nabla B$ with
the divergence in~\eqref{eq:div-convention}.
Combining~\eqref{eq:section-pairing-W} and~\eqref{eq:div-W},
\[
 \kap\delta a_\alpha+\ip{\ii u}{w_\alpha}
 =\left(\kap\Div B+\frac12d\abs u^2\right)(X_\alpha)
   -\kap f_\alpha\tr B.
\]
The three identities in~\eqref{eq:sphere-frame} now yield
\begin{equation}\label{eq:square-W}
\begin{split}
 \sum_\alpha\left(\kap\delta a_\alpha
                    +\ip{\ii u}{w_\alpha}\right)^2
 &=\abs{\kap\Div B+\frac12d\abs u^2}^2
      +\kap^2(\tr B)^2\\
 &=\abs{\kap\Div B+\frac12d\abs u^2}^2
      +4\kap^2\abs F^2.
\end{split}
\end{equation}
The divergence term from~\eqref{eq:da-sum} and the last two
pairing terms in~\eqref{eq:hessian-integrated} complete a square:
\begin{equation}\label{eq:completion}
\begin{split}
 &-\kap\abs{\delta a_\alpha}^2
 -2\ip{\ii u}{w_\alpha}\delta a_\alpha
 -\frac1\kap\bigl(\ip{\ii u}{w_\alpha}\bigr)^2\\
 &\hspace{3em}
 =-\frac1\kap\left(\kap\delta a_\alpha
                          +\ip{\ii u}{w_\alpha}\right)^2.
\end{split}
\end{equation}

\smallskip
\noindent\emph{Substitution into the Hessian.}
Insert~\eqref{eq:a-contractions}--\eqref{eq:completion}
into~\eqref{eq:hessian-integrated}. The curvature contribution
$6\kap\abs F^2$ in~\eqref{eq:da-sum} is reduced by
$4\kap\abs F^2$ from~\eqref{eq:square-W}. Grouping the two
Bochner expressions separately gives
\begin{equation}\label{eq:trace-before-bochner}
\begin{split}
 \sum_\alpha Q(W_\alpha)
 ={}&\int\left[\kap\norm{\nabla B}_{\HS}^2
                    +2\abs u^2\abs F^2\right]\dd\\
 &+\int\left[\abs{\nabla^ADu}^2
          +\frac{3\abs u^2-1}{2\kap}\abs{Du}^2\right]\dd\\
 &+2\kap\norm F_2^2+\norm{Du}_2^2
       -4\int\ip{B(Du)}{Du}\dd\\
 &-\frac1\kap\norm{\kap\Div B+\frac12d\abs u^2}_2^2.
\end{split}
\end{equation}
By~\eqref{eq:absolute-derivative-equality}, the first integral equals
the left-hand side of~\eqref{eq:curvature-bochner}.
The second is the left-hand side of~\eqref{eq:section-bochner}.
Substituting both identities gives
\begin{align*}
 \sum_\alpha Q(W_\alpha)
 ={}&-2\kap\norm F_2^2-\norm{Du}_2^2\\
 & -4\int\ip{(B+\ii \mf)(Du)}{Du}\dd\\
 &-\frac1\kap\norm{\kap\Div B+\frac12d\abs u^2}_2^2.
\end{align*}
This proves~\eqref{eq:negative-trace}.
The inequality follows from~\eqref{eq:absolute-positive}.
\end{proof}

\subsection{Classification of Stable Critical Points on $\sphere$}

\begin{proof}[Proof of Theorem~\ref{thm:main}]
Proposition~\ref{prop:regularity} reduces the proof to a smooth stable
critical point. Stability extends to the $H^1$ variations $W_\alpha$,
so~\eqref{eq:negative-trace} implies
\[
 0\le\sum_\alpha Q(W_\alpha)
 \le-2\kap\norm F_2^2-\norm{Du}_2^2.
\]
Therefore $F=Du=0$. Since $H^1_{\mathrm{dR}}(\sphere)=0$,
the closed one-form $A$ is exact, say $A=d\psi$.
The gauge transformation $e^{-\ii\psi}$ sends the pair to $(0,c)$,
where $c$ is constant because $Du=0$.
The scalar equation gives
\[
 (1-\abs c^2)c=0.
\]
The option $c=0$ is unstable: for a nonzero constant section $w$,
\begin{equation}\label{eq:zero-instability}
 Q_{(0,0)}(0,w)=-\frac1{2\kap}\norm w_2^2<0.
\end{equation}
Thus $\abs c=1$, and a constant gauge transformation sends $c$ to $1$.
Conversely, $E_\kap\ge0=E_\kap(0,1)$, so the vacuum is stable.
\end{proof}

\section{The lowest eigenvalue on the gauge quotient}\label{sec:spectrum}

Throughout this section, $(A,u)$ is a smooth critical point.
We first construct the orthogonal projection away from gauge directions,
and then use the Hessian trace to estimate the lowest eigenvalue.

\subsection{Orthogonal Projection Away From Gauge Directions}

Equip the space of variations with the weighted $L^2$ inner product
\[
 (Z,Z')_\kap
 =\int_{\sphere}
 \bigl(\kap\ip a{a'}+\ip w{w'}\bigr)\dd,
 \qquad Z=(a,w),\quad Z'=(a',w').
\]
The infinitesimal gauge direction associated with a smooth real function
$\chi$ is
\[
 \cT\chi=(d\chi,\ii u\chi).
\]
Integration by parts gives
\[
 (Z,\cT\chi)_\kap
 =\int_{\sphere}
 \chi\bigl(\kap\delta a+\ip{\ii u}w\bigr)\dd.
\]
Consequently, the orthogonal complement of the gauge directions is
characterized by
\[
 \cG Z=0,
 \qquad
 \cG(a,w)=\kap\delta a+\ip{\ii u}w.
\]
$\cG$ is the adjoint of $\cT$ with respect to the weighted inner product. And its composition with $\cT$ is 
\begin{equation}\label{eq:gauge-adjoint}
\begin{aligned}
 \cG\cT\chi
 &=(\kap\Delta+\abs u^2)\chi,
\end{aligned}
\end{equation}
where $\Delta=\delta d$ is the nonnegative scalar Laplacian.

\Needspace{12\baselineskip}
Let
\[
 Q(Z,Z')=\frac14\bigl(Q(Z+Z')-Q(Z-Z')\bigr)
\]
denote the symmetric bilinear form associated with $Q$.
For each fixed $t$, gauge invariance gives
\[
 E_\kap(A+ta+s\,d\chi,e^{\ii s\chi}(u+tw))
 =E_\kap(A+ta,u+tw).
\]
Differentiating first in $s$ and then in $t$ at $(s,t)=(0,0)$ yields
\begin{equation}\label{eq:gauge-radical}
\begin{split}
 0
 &=\left.\frac{d}{dt}\right|_{t=0}
   \delta E_{\kap,(A+ta,u+tw)}
   \bigl(d\chi,\ii(u+tw)\chi\bigr)\\
 &=Q(Z,\cT\chi)+\delta E_{\kap,(A,u)}(0,\ii w\chi)\\
 &=Q(Z,\cT\chi).
\end{split}
\end{equation}
Thus every infinitesimal gauge direction lies in the radical of the
Hessian $Q$. By density and continuity, these identities extend to
\[
 Z\in H^1(T^*\sphere;\R)\oplus H^1(\sphere;\C),
 \qquad \chi\in H^2(\sphere;\R).
\]

\begin{lemma}\label{lem:gauge-projection}
Every $Z\in H^1(T^*\sphere;\R)\oplus H^1(\sphere;\C)$ has an
orthogonal projection 
\[\Pi Z=Z-\cT\chi,\]
with
$\chi\in H^2(\sphere;\R)$, satisfying
\begin{equation}\label{eq:projection-properties}
\begin{aligned}
 \cG\Pi Z&=0,\\
 Q(\Pi Z)&=Q(Z),\\
 \norm Z_\kap^2
 &=\norm{\Pi Z}_\kap^2+\norm{\cT\chi}_\kap^2.
\end{aligned}
\end{equation}
\end{lemma}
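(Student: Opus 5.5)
To construct $\Pi$, we solve for $\chi$ the elliptic equation
\[
 \cG\cT\chi=(\kap\Delta+\abs u^2)\chi=\cG Z,
\]
using \eqref{eq:gauge-adjoint}. For $Z=(a,w)$ in $H^1$, the right-hand side $\cG Z=\kap\delta a+\ip{\ii u}w$ lies in $L^2$, since $u$ is smooth. The whole lemma then rests on showing that $L=\kap\Delta+\abs u^2$ is an isomorphism $H^2(\sphere;\R)\to L^2(\sphere;\R)$, or else that $\cG Z$ lies in its range.

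\emph{Invertibility of $L$.} $L$ is self-adjoint and nonnegative, with
\[
 \int\chi L\chi\dd=\kap\norm{d\chi}_2^2+\int\abs u^2\chi^2\dd.
\]
Its kernel consists of constants $\chi$ with $u\chi\equiv0$. There are two cases.

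If $u\not\equiv0$, the kernel is trivial. By Fredholm theory, or Lax--Milgram on $H^1$ for the coercive form $\kap\norm{d\chi}^2+\int\abs u^2\chi^2$, followed by elliptic regularity to reach $H^2$, $L$ is invertible.

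If $u\equiv0$, then $L=\kap\Delta$, and its range is the mean-zero functions. In this case $\cG Z=\kap\delta a$ integrates to zero, so a solution $\chi\in H^2$ still exists; it is unique up to constants, and constants satisfy $\cT c=(0,0)$ here. I expect handling this degenerate case cleanly to be the main (mild) obstacle, together with checking that coercivity genuinely holds when $u\not\equiv0$. For the latter I would argue by contradiction via Rellich compactness: a normalized sequence with $\kap\norm{d\chi_n}_2^2+\int\abs u^2\chi_n^2\to0$ converges to a constant $c$ with $\int\abs u^2c^2=0$, forcing $c=0$.

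\emph{Verifying the three properties.} With $\chi\in H^2$ fixed as above, $\cT\chi=(d\chi,\ii u\chi)\in H^1$, so $\Pi Z=Z-\cT\chi\in H^1$.

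First, $\cG\Pi Z=\cG Z-\cG\cT\chi=0$ by construction.

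Second, expand $Q(Z-\cT\chi)=Q(Z)-2Q(Z,\cT\chi)+Q(\cT\chi)$. Both correction terms vanish by \eqref{eq:gauge-radical}: this identity was extended to $Z\in H^1$ and $\chi\in H^2$, and is applied once with $Z$ and once with $\cT\chi$ in place of $Z$.

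Third, for the norm identity, the integration-by-parts formula $(Z',\cT\chi)_\kap=\int\chi\,\cG Z'\dd$ extends by density to $Z'\in H^1$ and $\chi\in H^2$. With $Z'=\Pi Z$ it gives
\[
 (\Pi Z,\cT\chi)_\kap=\int\chi\,\cG\Pi Z\dd=0.
\]
So $Z=\Pi Z+\cT\chi$ is an orthogonal decomposition, and Pythagoras yields $\norm Z_\kap^2=\norm{\Pi Z}_\kap^2+\norm{\cT\chi}_\kap^2$.

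Finally, $\Pi$ is the orthogonal projection onto the complement of the gauge directions. It is idempotent: if $\cG Z=0$ then $\chi=0$, or $\chi$ is constant in the case $u\equiv0$, where $\cT\chi=0$ anyway. Its kernel is the range of $\cT$.
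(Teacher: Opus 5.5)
Your proposal is correct and follows the paper's proof essentially step by step. You solve $(\kap\Delta+\abs u^2)\chi=\cG Z$ with the same case split on $u\equiv0$, using $\int\cG Z\dd=\kap\int\delta a\dd=0$, and then verify $\cG\Pi Z=0$, the Pythagorean identity via $(\Pi Z,\cT\chi)_\kap=\int\chi\,\cG\Pi Z\dd=0$, and $Q(\Pi Z)=Q(Z)$ via the gauge-radical identity; your Lax--Milgram/Rellich coercivity argument and the idempotence remark only add detail to the paper's appeal to the Fredholm alternative and elliptic regularity.
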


\begin{proof}
We solve
\[
 (\kap\Delta+\abs u^2)\chi=\cG Z\in L^2(\sphere;\R).
\]
The identity
\[
 \int_{\sphere}\chi(\kap\Delta+\abs u^2)\chi\dd
 =\kap\norm{d\chi}_2^2+\norm{u\chi}_2^2
\]
shows that the kernel is trivial if $u\not\equiv0$.
If $u\equiv0$, the kernel consists of constants, and the right-hand side
is orthogonal to it because
\[
 \int_{\sphere}\cG Z\dd
 =\kap\int_{\sphere}\delta a\dd=0.
\]
The Fredholm alternative and elliptic regularity therefore give a
solution $\chi\in H^2$. When $u\equiv0$, we impose
$\int_{\sphere}\chi\dd=0$ to make the solution unique.

For $\Pi Z=Z-\cT\chi$, equation~\eqref{eq:gauge-adjoint} gives
\[
 \cG\Pi Z
 =\cG Z-(\kap\Delta+\abs u^2)\chi=0.
\]
It also gives
\[
 (\Pi Z,\cT\chi)_\kap
 =\int_{\sphere}\chi\cG\Pi Z\dd=0,
\]
which proves the asserted norm identity. Finally,
\eqref{eq:gauge-radical} implies
\[
 Q(\Pi Z)
 =Q(Z)-2Q(Z,\cT\chi)+Q(\cT\chi)
 =Q(Z).
 \qedhere
\]
\end{proof}

\subsection{Existence And Regularity Of Lowest Eigenvector}

Define
\begin{equation}\label{eq:mu-definition}
 \mu_1=\inf_{0\ne Z\in H^1,\ \cG Z=0}
             \frac{Q(Z)}{\norm Z_\kap^2}.
\end{equation}
On $\ker\cG$, we have $\ip{\ii u}w=-\kap\delta a$.
Substitution in~\eqref{eq:hessian-integrated} yields
\begin{equation}\label{eq:slice-hessian}
\begin{split}
 Q(a,w)=\int_{\sphere}\bigg[
 &\kap\bigl(\abs{da}^2+\abs{\delta a}^2\bigr)+\abs{Dw}^2
     +\abs u^2\abs a^2\\
 &+\frac{3\abs u^2-1}{2\kap}\abs w^2
     +4\ip{\ii D_{a^\sharp}u}w\bigg]\dd.
\end{split}
\end{equation}
The elementary estimates
\[
 \abs{Dw}^2\ge\frac12\abs{dw}^2-\abs A^2\abs w^2
\]
and
\[
 4\abs{\ip{\ii D_{a^\sharp}u}w}
 \le 4\abs{Du}\abs a\abs w
 \le 2\abs{Du}\bigl(\abs a^2+\abs w^2\bigr)
\]
control the lower-order terms. The integrated Weitzenb\"ock identity
\[
 \norm{da}_2^2+\norm{\delta a}_2^2
 =\norm{\nabla a}_2^2+2\norm a_2^2
\]
then gives the G{\aa}rding inequality
\begin{equation}\label{eq:garding}
 Q(Z)+C\norm Z_\kap^2\ge c\norm Z_{H^1}^2,
 \qquad Z\in H^1\cap\ker\cG,
\end{equation}
where $c>0$ and $C>0$ depend on the fixed smooth critical point and
$\kap$.

Choose a minimizing sequence with $\norm{Z_j}_\kap=1$.
The preceding estimate makes $(Z_j)$ bounded in $H^1$.
After passing to a subsequence,
\[
 Z_j\rightharpoonup Y \quad\text{in }H^1,
 \qquad
 Z_j\to Y \quad\text{in }L^2.
\]
The bounded linear map $\cG:H^1\to L^2$ preserves weak convergence,
so $\cG Y=0$. Strong $L^2$ convergence gives $\norm Y_\kap=1$.
The derivative terms in~\eqref{eq:slice-hessian} are weakly lower
semicontinuous. All remaining terms converge by strong $L^2$
convergence, since their coefficients are smooth and bounded. Hence
\[
 \mu_1\le Q(Y)\le\liminf_{j\to\infty}Q(Z_j)=\mu_1.
\]
Thus $Y$ attains the minimum.

The constrained Euler equation is
\[
 Q(Y,Z)=\mu_1(Y,Z)_\kap
 \qquad\text{for every }Z\in H^1\cap\ker\cG.
\]
For arbitrary $Z\in H^1$, the gauge projection and
\eqref{eq:gauge-radical} give
\[
 Q(Y,Z)=Q(Y,\Pi Z)=\mu_1(Y,\Pi Z)_\kap
 =\mu_1(Y,Z)_\kap.
\]
The weak eigenvalue equation therefore holds for arbitrary test pairs.

Write $Y=(a,w)$ and let $Z=(b,z)$ be any test pair.
Polarize the full Hessian~\eqref{eq:hessian-integrated} and use
$\ip{\ii u}w=-\kap\delta a$. The terms containing
$\ip{\ii u}z$ cancel, leaving
\[
\begin{split}
 Q(Y,Z)=\int_{\sphere}\bigg[
 &\kap\ip{da}{db}+\kap\delta a\,\delta b+\ip{Dw}{Dz}
    +\abs u^2\ip a b\\
 &+\frac{3\abs u^2-1}{2\kap}\ip w z
    +2\ip{\ii D_{a^\sharp}u}z
    +2\ip{\ii D_{b^\sharp}u}w\bigg]\dd.
\end{split}
\]
Integration by parts yields
\begin{equation}\label{eq:eigen-system}
\begin{cases}
 \displaystyle
 \Delta_Ha+\frac{\abs u^2}{\kap}a
       +\frac2\kap\ip{\ii Du}w=\mu_1a,\\[6pt]
 \displaystyle
 D^*Dw+\frac{3\abs u^2-1}{2\kap}w
       +2\ii D_{a^\sharp}u=\mu_1w.
\end{cases}
\end{equation}
Here $\ip{\ii Du}w$ is the real one-form whose value on a vector
$X$ is $\ip{\ii D_Xu}w$. The principal part of
\eqref{eq:eigen-system} is diagonal and elliptic, and its coefficients
are smooth. Elliptic regularity and iteration imply that $Y$ is smooth.

Under a gauge transformation, variations transform by
$(a,w)\mapsto(a,e^{\ii\chi}w)$. This preserves the Hessian, the
weighted norm, and the condition $\cG(a,w)=0$. Consequently,
$\mu_1$ is gauge invariant.

\subsection{The Eigenvalue Estimate}

\begin{proof}[Proof of Theorem~\ref{thm:gap}]
Suppose first that
\[
 2\kap\norm F_2^2+\norm{Du}_2^2>0.
\]
The contraction identities~\eqref{eq:a-contractions} and
\eqref{eq:w-contractions} give
\[
 \sum_\alpha\norm{W_\alpha}_\kap^2
 =2\kap\norm F_2^2+\norm{Du}_2^2.
\]
Projecting these variations onto $\ker\cG$ preserves their Hessians
and decreases their norms. Thus
\[
 \sum_\alpha\norm{\Pi W_\alpha}_\kap^2
 \le 2\kap\norm F_2^2+\norm{Du}_2^2,
\]
whereas the trace identity gives
\[
 \sum_\alpha Q(\Pi W_\alpha)
 =\sum_\alpha Q(W_\alpha)
 \le-2\kap\norm F_2^2-\norm{Du}_2^2<0.
\]
In particular, at least one projected variation is nonzero.
By the definition of $\mu_1$,
\[
\begin{aligned}
 \mu_1
 &\le
 \frac{\sum_\alpha Q(\Pi W_\alpha)}
      {\sum_\alpha\norm{\Pi W_\alpha}_\kap^2}\\
 &\le
 -\frac{2\kap\norm F_2^2+\norm{Du}_2^2}
       {\sum_\alpha\norm{\Pi W_\alpha}_\kap^2}
 \le-1.
\end{aligned}
\]

If $2\kap\norm F_2^2+\norm{Du}_2^2=0$, then $F=Du=0$.
Since $\sphere$ is simply connected, the pair is gauge equivalent to
$(0,c)$ for a constant $c\in\C$. The scalar equation gives
$(1-\abs c^2)c=0$. Therefore $(A,u)$ is gauge equivalent to either
$(0,1)$ or $(0,0)$. The former is the vacuum, so it remains to consider
$(0,0)$.

At $(0,0)$ the orthogonality condition is $\delta a=0$, and
\[
 Q(a,w)=\kap\norm{da}_2^2+\norm{dw}_2^2
             -\frac1{2\kap}\norm w_2^2.
\]
It follows that
\[
 Q(a,w)+\frac1{2\kap}\norm{(a,w)}_\kap^2
 =\kap\norm{da}_2^2+\norm{dw}_2^2+\frac12\norm a_2^2
 \ge0.
\]
Equality is attained when $a=0$ and $w$ is a nonzero constant.
Hence $\mu_1(0,0)=-1/(2\kap)$.
\end{proof}



\appendix
\section{Weak critical points}\label{app:regularity}

We prove that every weak critical point in
\eqref{eq:weak-class-intro} admits a smooth representative in Coulomb
gauge; compare
\cite[Appendix, Proposition~A.1 and Remark~A.3]{PigatiStern2021}.
Throughout the appendix, $H^k=W^{k,2}$.

\begin{lemma}\label{lem:weak-variation}
At $(A,u)\in\cC$, the first variation is continuous on $H^1$
variations, and~\eqref{eq:raw-hessian} defines a continuous quadratic
form there. Consequently, weak criticality and stability extend to
$H^1$ variations.
\end{lemma}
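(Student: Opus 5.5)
The plan is to bound each term of the first variation \eqref{eq:first-variation} and of the quadratic form \eqref{eq:raw-hessian} separately. The bounds should use only $A\in H^1$, $u\in H^1\cap L^\infty$, and Sobolev embedding on the three-dimensional closed manifold $\sphere$, namely $H^1\hookrightarrow L^6$ and hence $H^1\hookrightarrow L^p$ for $p\le6$. Set $M=\norm u_\infty$. Since $Du=du-\ii Au$ with $A\in L^6$ and $u\in L^\infty$, we have $Du\in L^2$ with $\norm{Du}_2\le\norm{du}_2+M\norm A_2$. Similarly $F=dA\in L^2$ and $1-\abs u^2\in L^\infty$.

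\emph{First variation.} For $(a,w)\in H^1$ I estimate each term of the first line of \eqref{eq:first-variation}. The curvature term satisfies $\abs{\int\kap\ip F{da}}\le\kap\norm F_2\norm{da}_2$. For the Higgs term I expand $Dw-\ii au=dw-\ii Aw-\ii au$. Here $\norm{Aw}_2\le\norm A_4\norm w_4$ and $\norm{au}_2\le M\norm a_2$, so the integrand is bounded by $\norm{Du}_2\norm{Dw-\ii au}_2\lesssim\norm{Du}_2(1+\norm A_{H^1})\norm{(a,w)}_{H^1}$. The potential term is bounded by $\kap^{-1}(1+M^2)M\norm w_1$. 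The expression is linear in $(a,w)$, so these bounds give continuity on $H^1$.

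\emph{Second variation.} I treat \eqref{eq:raw-hessian} the same way, term by term. The term $\kap\abs{da}^2$ is controlled by $\norm a_{H^1}^2$. The term $\abs{Dw-\ii au}^2$ is controlled by the previous bound, squared. The mixed term satisfies $\abs{\int\ip{Du}{\ii aw}}\le\norm{Du}_2\norm a_4\norm w_4$, which is fine by $H^1\subset L^4$. The potential terms are bounded by $\kap^{-1}(M^2+\tfrac12(1+M^2))\norm w_2^2$. Each of these is a bounded quadratic form on $H^1$, so $Q$ is continuous; polarization gives continuity of the bilinear form as well.

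\emph{Extension.} Smooth pairs are dense in $H^1(T^*\sphere)\oplus H^1(\sphere;\C)$. Vanishing of a continuous linear functional on a dense set gives vanishing on all of $H^1$. Likewise, nonnegativity of a continuous quadratic form on a dense set persists in the limit.

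One point deserves care, though it is the only one and it is mild. The second variation should be the genuine second derivative of $t\mapsto E_\kap(A+ta,u+tw)$. This matters because $u+tw$ need not be bounded when $w\in H^1$ only. On smooth test pairs, however, $E_\kap$ along the affine path is a polynomial in $t$ with finite coefficients: the quartic potential terms contain at most $\abs w^4$, which lies in $L^1$ since $w\in L^6$. So \eqref{eq:raw-hessian} is the genuine second derivative. For $H^1$ variations we simply take \eqref{eq:raw-hessian} as the definition of $Q$, which is consistent with the statement.
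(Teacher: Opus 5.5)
Your proposal is correct and follows the paper's argument: you bound each term of the first variation and of \eqref{eq:raw-hessian} with H\"older and $H^1\hookrightarrow L^q$ ($q\le6$), using $u\in L^\infty$ and the $L^2\cdot L^4\cdot L^4$ bound for the mixed term, and then conclude by density of smooth pairs in $H^1$. The differences are cosmetic: you bound $Aw$ by $L^4\cdot L^4$ where the paper uses $L^6\cdot L^3$, and you add a justification, left implicit in the paper, that \eqref{eq:raw-hessian} is the genuine second derivative along smooth test pairs.
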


\begin{proof}
Since $A,u\in H^1$ and $u\in L^\infty$, we have
\[
 F=dA\in L^2,
 \qquad
 Du=du-\ii Au\in L^2.
\]
In dimension three, $H^1\hookrightarrow L^q$ for $2\le q\le6$.
In particular,
\[
\begin{aligned}
 \norm{Aw}_2&\le\norm A_6\norm w_3,\\
 \norm{au}_2&\le\norm u_\infty\norm a_2.
\end{aligned}
\]
The mixed Hessian term is controlled by
\[
 \left|\int_{\sphere}\ip{Du}{\ii aw}\dd\right|
 \le\norm{Du}_2\norm a_4\norm w_4.
\]
The potential coefficients are bounded because $u\in L^\infty$.
These estimates prove continuity of the first variation and the
polarized Hessian. Smooth approximation in $H^1$ then proves the
assertion.
\end{proof}

\begin{proposition}\label{prop:regularity}
Every weak critical point in $\cC$ is gauge equivalent, by
$e^{\ii\chi}$ with real $\chi\in H^2$, to a smooth critical point in
Coulomb gauge. This transformation preserves weak stability.
\end{proposition}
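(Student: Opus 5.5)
The plan is the standard Coulomb gauge fixing followed by elliptic bootstrapping. The gauge change is carried out at the level of $H^1$, and the weak equations and stability are transported using Lemma~\ref{lem:weak-variation}.

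\emph{Coulomb gauge.} Since $d^*A\in L^2$ has zero mean, we solve $\Delta\chi=-d^*A$ on $\sphere$ with $\chi\in H^2$. Set
\[
 \tilde A=A+d\chi,\qquad \tilde u=e^{\ii\chi}u .
\]
Then $d^*\tilde A=0$ and $\tilde A\in H^1$. Next we check that $\tilde u\in H^1\cap L^\infty$. Boundedness is clear because $|\tilde u|=|u|$. For the derivative, write $d\tilde u=e^{\ii\chi}(du+\ii u\,d\chi)$. Here $d\chi\in H^1\subset L^6$ and $u\in L^\infty$, so $d\tilde u\in L^2$.

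\emph{Transporting criticality and stability.} Gauge invariance of $E_\kap$ under smooth $\chi$ extends to $\chi\in H^2$ by approximating $\chi_j\to\chi$ in $H^2$; the continuity estimates of Lemma~\ref{lem:weak-variation} justify the limit. Under the gauge change, variations transform by $(a,w)\mapsto(a,e^{\ii\chi}w)$. This map is a bijection of $H^1$ variations: $e^{\ii\chi}\in H^2\cap C^0$ multiplies $H^1$ into $H^1$ in dimension three, using $d\chi\in L^6$ and $w\in L^3$. Hence the first variation and the Hessian correspond exactly. Lemma~\ref{lem:weak-variation} then shows that weak criticality and stability, known for smooth test pairs, extend to $H^1$ test pairs. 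So $(\tilde A,\tilde u)$ is a weak critical point, and it is stable if $(A,u)$ was.

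\emph{Regularity.} In Coulomb gauge the equations \eqref{eq:EL} become
\begin{align*}
 \kap\Delta_H\tilde A&=\ip{d\tilde u-\ii\tilde A\tilde u}{\ii\tilde u},\\
 \Delta\tilde u&=-2\ii\ip{\tilde A}{d\tilde u}-|\tilde A|^2\tilde u+\frac{1-|\tilde u|^2}{2\kap}\tilde u .
\end{align*}
In the second equation, the term $\ii(d^*\tilde A)\tilde u$ has dropped out because $d^*\tilde A=0$. Both equations hold weakly, and the system is elliptic.

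For the bootstrap, first note that the right side of the $\tilde A$ equation lies in $L^2$ because $\tilde u\in L^\infty$. Hence $\tilde A\in H^2\subset C^{0,1/2}$. Next, the right side of the $\tilde u$ equation lies in $L^2$ because $\tilde A\in L^\infty$ and $d\tilde u\in L^2$. So $\tilde u\in H^2$, hence $d\tilde u\in L^6$. From then on both right-hand sides gain derivatives: $W^{k,p}$ regularity of $(\tilde A,\tilde u)$ yields $W^{k+1,p}$ regularity of the right sides modulo products, and Sobolev multiplication in three dimensions closes the induction. Iterating with $L^p$ and Schauder estimates gives smoothness.

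The main obstacle is the first bootstrap step for $\tilde u$. It needs $\tilde A\in L^\infty$, which itself needs the $L^\infty$ bound on $\tilde u$, so the order matters. A second technical point is justifying the gauge change with only $\chi\in H^2$, which is not $C^1$. This is handled by the multiplier estimate above.
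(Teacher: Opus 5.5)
Your proposal is correct and follows essentially the same route as the paper. Both arguments first pass to Coulomb gauge by solving $\Delta\chi=-\delta A$ with $\chi\in H^2$. Both then transport $H^1$ variations by $(a,w)\mapsto(a,e^{\ii\chi}w)$, using the multiplier bound $\norm{d\chi\,w}_2\le\norm{d\chi}_6\norm w_3$ together with Lemma~\ref{lem:weak-variation}. Finally, both bootstrap in the same order: $A'\in H^2\subset L^\infty$ first, then $u'\in H^2$, then iteration.

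The only cosmetic difference is how you justify gauge invariance for $\chi\in H^2$: you use approximation, but it already holds pointwise a.e. from $F_{A+d\chi}=F_A$ and $D_{A+d\chi}(e^{\ii\chi}u)=e^{\ii\chi}D_Au$, which is what the paper uses implicitly.
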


\begin{proof}
Since $\int_{\sphere}\delta A\dd=0$, the equation
\[
 \Delta\chi=-\delta A,
 \qquad
 \int_{\sphere}\chi\dd=0
\]
has a solution $\chi\in H^2(\sphere;\R)$.
Define the gauge transformation and the transformed pair by
\[
 \gamma=e^{\ii\chi},
 \qquad
 A'=A+d\chi,
 \qquad
 u'=\gamma u.
\]
The chain rule gives
\[
 d\gamma=\ii\gamma\,d\chi
\]
and
\[
 \nabla^2\gamma
 =\ii\gamma\nabla^2\chi
      -\gamma\,d\chi\otimes d\chi.
\]
Because $d\chi\in H^1\subset L^4\cap L^6$, it follows that
$\gamma\in H^2$. Moreover,
\[
 \norm{d\gamma\,w}_2
 \le\norm{d\gamma}_6\norm w_3.
\]
Thus multiplication by $\gamma$ and $\gamma^{-1}$ preserves $H^1$,
and $(A',u')$ belongs to $\cC$.

The transformed fields satisfy
\[
 \delta A'=0,
 \qquad
 F_{A'}=F_A,
 \qquad
 D_{A'}u'=\gamma D_Au.
\]
For every $H^1$ variation $(a,w)$, gauge invariance gives
\[
 E_\kap(A'+ta,u'+t\gamma w)
 =E_\kap(A+ta,u+tw).
\]
Differentiating twice yields
\begin{equation}\label{eq:weak-gauge-hessian}
 Q_{(A',u')}(a,\gamma w)=Q_{(A,u)}(a,w).
\end{equation}
Lemma~\ref{lem:weak-variation} also permits first differentiation
against $H^1$ tests. Hence $(A',u')$ is a weak critical point, and its
stability is equivalent to that of $(A,u)$.

In Coulomb gauge, the connection equation is
\begin{equation}\label{eq:connection-bootstrap}
 \kap\Delta_HA'
 =\ip{D_{A'}u'}{\ii u'}
 =\ip{du'}{\ii u'}-\abs{u'}^2A'.
\end{equation}
Its right-hand side belongs to $L^2$, since $u'\in L^\infty$ and
$D_{A'}u'\in L^2$. Elliptic regularity gives $A'\in H^2$.
The three-dimensional Sobolev embedding then gives
$A'\in L^\infty$.

Expanding the scalar equation using $\delta A'=0$, we obtain
\begin{equation}\label{eq:scalar-bootstrap}
 \Delta u'
 =\frac{1-\abs{u'}^2}{2\kap}u'
      -2\ii\,du'\bigl((A')^\sharp\bigr)-\abs{A'}^2u'.
\end{equation}
Every term on the right belongs to $L^2$: in particular,
\[
 \norm{du'((A')^\sharp)}_2
 \le\norm{A'}_\infty\norm{du'}_2
\]
and
\[
 \norm{\abs{A'}^2u'}_2
 \le\norm{A'}_\infty^2\norm{u'}_2.
\]
It follows that $u'\in H^2$ as well.

For each integer $m\ge2$, the three-dimensional Sobolev multiplication
estimates give
\[
 H^m\cdot H^m\subset H^m,
 \qquad
 H^m\cdot H^{m-1}\subset H^{m-1}.
\]
If $A',u'\in H^m$, both right-hand sides of
\eqref{eq:connection-bootstrap} and~\eqref{eq:scalar-bootstrap}
belong to $H^{m-1}$. Elliptic regularity therefore yields
$A',u'\in H^{m+1}$. Iteration and Sobolev embedding prove that both
fields are smooth.
\end{proof}

\noindent\textbf{Acknowledgement:} The author is supported by the Fundamental Research Funds for the Central Universities.

\noindent\textbf{AI Usage Statement:} The mathematical results and arguments in this paper are those of
the author. During the preparation of the manuscript, AI tools,
including Doubao and ChatGPT, were used to assist with language
editing and presentation of certain
calculations. All AI-assisted material was carefully verified and
revised by the author, who takes full responsibility for the
manuscript.


\begin{thebibliography}{99}

\bibitem{Badran2026}
M.~Badran,
\emph{Stable solutions in the abelian Higgs model},
preprint, arXiv:2609.11647v1 [math.AP] (2026).
\href{https://arxiv.org/abs/2609.11647v1}{arXiv:2609.11647v1}.

\bibitem{BGH2026}
M.~Badran, M.~A.~M.~Guaraco, and A.~Halavati,
\emph{Integral curvature estimates and geodesic limits for stable
abelian Higgs fields},
preprint, arXiv:2609.21893v1 [math.DG] (2026).

\bibitem{BethuelBrezisHelein1994}
F.~Bethuel, H.~Brezis, and F.~H\'elein,
\emph{Ginzburg--Landau Vortices},
Progress in Nonlinear Differential Equations and Their Applications,
vol.~13, Birkh\"auser, Boston, MA, 1994.
\href{https://doi.org/10.1007/978-1-4612-0287-5}
{doi:10.1007/978-1-4612-0287-5}.

\bibitem{Bogomolny1976}
E.~B.~Bogomol'nyi,
\emph{The stability of classical solutions},
Soviet J. Nuclear Phys. \textbf{24} (1976), no.~4, 449--454.

\bibitem{BourguignonLawson1981}
J.-P.~Bourguignon and H.~B.~Lawson, Jr.,
\emph{Stability and isolation phenomena for Yang--Mills fields},
Comm. Math. Phys. \textbf{79} (1981), no.~2, 189--230.
\href{https://doi.org/10.1007/BF01942061}
{doi:10.1007/BF01942061}.

\bibitem{BourguignonLawsonSimons1979}
J.-P.~Bourguignon, H.~B.~Lawson, and J.~Simons,
\emph{Stability and gap phenomena for Yang--Mills fields},
Proc. Natl. Acad. Sci. USA \textbf{76} (1979), no.~4, 1550--1553.
\href{https://doi.org/10.1073/pnas.76.4.1550}
{doi:10.1073/pnas.76.4.1550}.

\bibitem{Bradlow1990}
S.~B.~Bradlow,
\emph{Vortices in holomorphic line bundles over closed K\"ahler manifolds},
Comm. Math. Phys. \textbf{135} (1990), no.~1, 1--17.
\href{https://doi.org/10.1007/BF02097654}
{doi:10.1007/BF02097654}.

\bibitem{Brezis2011}
H.~Brezis,
\emph{Functional Analysis, Sobolev Spaces and Partial
Differential Equations},
Universitext, Springer, New York, 2011.

\bibitem{Cheng2020}
D.~R.~Cheng,
\emph{Instability of solutions to the Ginzburg--Landau equation on
$\mathbb S^n$ and $\mathbb {CP}^n$},
J. Funct. Anal. \textbf{279} (2020), no.~8, Paper No.~108669.
\href{https://doi.org/10.1016/j.jfa.2020.108669}
{doi:10.1016/j.jfa.2020.108669}.

\bibitem{Cheng2021}
D.~R.~Cheng,
\emph{Stable solutions to the abelian Yang--Mills--Higgs equations on
$\mathbb S^2$ and $\mathbb T^2$},
J. Geom. Anal. \textbf{31} (2021), no.~10, 9551--9572.
\href{https://doi.org/10.1007/s12220-021-00619-y}
{doi:10.1007/s12220-021-00619-y}.

\bibitem{DHPExcess2026}
G.~De~Philippis, A.~Halavati, and A.~Pigati,
\emph{Decay of excess for the abelian Higgs model},
J. Eur. Math. Soc. (2026), published online.
\href{https://doi.org/10.4171/JEMS/1819}
{doi:10.4171/JEMS/1819}.

\bibitem{DePhilippisPigati2024}
G.~De~Philippis and A.~Pigati,
\emph{Non-degenerate minimal submanifolds as energy concentration sets:
a variational approach},
Comm. Pure Appl. Math. \textbf{77} (2024), no.~8, 3581--3627.
\href{https://doi.org/10.1002/cpa.22193}
{doi:10.1002/cpa.22193}.

\bibitem{GarciaPrada1994}
O.~Garc\'ia-Prada,
\emph{A direct existence proof for the vortex equations over a compact
Riemann surface},
Bull. London Math. Soc. \textbf{26} (1994), no.~1, 88--96.
\href{https://doi.org/10.1112/blms/26.1.88}
{doi:10.1112/blms/26.1.88}.

\bibitem{HanJinWen2023}
X.~Han, X.~Jin, and Y.~Wen,
\emph{Stability and energy identity for Yang--Mills--Higgs pairs},
J. Math. Phys. \textbf{64} (2023), no.~2, Paper No.~021511.
\href{https://doi.org/10.1063/5.0130905}{doi:10.1063/5.0130905}.

\bibitem{JaffeTaubes1980}
A.~Jaffe and C.~Taubes,
\emph{Vortices and Monopoles: Structure of Static Gauge Theories},
Progress in Physics, vol.~2, Birkh\"auser, Boston, MA, 1980.

\bibitem{Kittaneh1985}
F.~Kittaneh,
\emph{On Lipschitz functions of normal operators},
Proc. Amer. Math. Soc. \textbf{94} (1985), no.~3, 416--418.
\href{https://doi.org/10.1090/S0002-9939-1985-0787884-4}
{doi:10.1090/S0002-9939-1985-0787884-4}.

\bibitem{LawsonSimons1973}
H.~B.~Lawson, Jr. and J.~Simons,
\emph{On stable currents and their application to global problems
in real and complex geometry},
Ann. of Math. (2) \textbf{98} (1973), no.~3, 427--450.
\href{https://doi.org/10.2307/1970913}
{doi:10.2307/1970913}.

\bibitem{MarxKuo2025}
J.~Marx-Kuo,
\emph{Second inner variations, stress-energy tensors, and index
of limiting varifolds},
J. Geom. Anal. \textbf{35} (2025), no.~10, Paper No.~290.
\href{https://doi.org/10.1007/s12220-025-02123-z}
{doi:10.1007/s12220-025-02123-z}.

\bibitem{PPSGamma2024}
D.~Parise, A.~Pigati, and D.~Stern,
\emph{Convergence of the self-dual $U(1)$-Yang--Mills--Higgs energies
to the $(n-2)$-area functional},
Comm. Pure Appl. Math. \textbf{77} (2024), no.~1, 670--730.
\href{https://doi.org/10.1002/cpa.22150}
{doi:10.1002/cpa.22150}.

\bibitem{PPSFlow2024}
D.~Parise, A.~Pigati, and D.~Stern,
\emph{The parabolic $U(1)$-Higgs equations and codimension-two
mean curvature flows},
Geom. Funct. Anal. \textbf{34} (2024), 1171--1225.
\href{https://doi.org/10.1007/s00039-024-00684-9}
{doi:10.1007/s00039-024-00684-9}.

\bibitem{PigatiStern2021}
A.~Pigati and D.~Stern,
\emph{Minimal submanifolds from the abelian Higgs model},
Invent. Math. \textbf{223} (2021), no.~3, 1027--1095.
\href{https://doi.org/10.1007/s00222-020-01000-6}
{doi:10.1007/s00222-020-01000-6}.

\bibitem{Simons1968}
J.~Simons,
\emph{Minimal varieties in Riemannian manifolds},
Ann. of Math. (2) \textbf{88} (1968), no.~1, 62--105.
\href{https://doi.org/10.2307/1970556}
{doi:10.2307/1970556}.

\bibitem{Stern2010}
M.~Stern,
\emph{Geometry of minimal energy Yang--Mills connections},
J. Differential Geom. \textbf{86} (2010), no.~1, 163--188.
\href{https://doi.org/10.4310/jdg/1299766686}
{doi:10.4310/jdg/1299766686}.

\bibitem{Taubes1980}
C.~H.~Taubes,
\emph{Arbitrary $N$-vortex solutions to the first order
Ginzburg--Landau equations},
Comm. Math. Phys. \textbf{72} (1980), no.~3, 277--292.
\href{https://doi.org/10.1007/BF01197552}
{doi:10.1007/BF01197552}.

\end{thebibliography}
\end{document}